\newcounter{results}[section] 
\theoremstyle{plain}
\newtheorem{theorem}[results]{Theorem}
\newtheorem{lemma}[results]{Lemma}
\newtheorem{proposition}[results]{Proposition}
\newtheorem{corollary}[results]{Corollary}
\newtheorem*{theorem*}{Theorem}
\newtheorem*{lemma*}{Lemma}
\newtheorem*{proposition*}{Proposition}
\newtheorem*{corollary*}{Corollary}
\newtheorem*{exercise*}{Exercise}
\newtheorem*{fact*}{Fact}
\theoremstyle{remark}
\newtheorem{remark}[results]{Remark}
\newtheorem*{remark*}{Remark}
\newtheorem*{question*}{Question}
\theoremstyle{definition}
\newtheorem*{definition*}{Definition}
\newtheorem*{example*}{Example}
\numberwithin{equation}{section}
\crefname{figure}{Figure}{Figures}
        \renewcommand{\comma}{\ensuremath{\, \text{, }}}
        \newcommand{\comma}{\ensuremath{\, \text{, }}}
\newcommand{\pheq}{\ensuremath{\hphantom{{}={}}}}
\newcommand{\R}{\ensuremath{\mathbb R}}
\DeclarePairedDelimiter\abs{\lvert}{\rvert} 
\newcommand{\scal}[2]{\ensuremath{\langle #1 , #2 \rangle}} 
\newcommand{\st}{\ensuremath{\ :\ }} 
\newcommand{\eqdef}{\ensuremath{\coloneqq}} 
\DeclareMathOperator{\tr}{tr}
\newcommand{\de}{\ensuremath{\, d}} 
\let\div\undefined
\newcommand{\div}{\ensuremath{\mathrm{div}}} 
\newcommand{\grad}{\ensuremath{\nabla}} 
\newcommand{\Hess}{\ensuremath{\operatorname{Hess}}} 
\newcommand{\vf}{\ensuremath{\mathfrak X}} 
\DeclareMathOperator{\vol}{vol} 
\newcommand{\gb}{\ensuremath{g}} 
\newcommand{\gc}{\ensuremath{{\tilde {g}}}} 
\colorlet{myGray}{gray}
\colorlet{myBlue}{blue}
\colorlet{myBlack}{black}
\colorlet{myBackground}{gray!10}
\title[On the stability of minimal submanifolds in conformal spheres]{On the stability of minimal submanifolds\\ in conformal spheres}
\author{Giada Franz and Federico Trinca}
\newcommand\printaddress{{
\setlength{\parindent}{17pt}
\footnotesize

\bigskip
\par 
{\scshape \noindent Giada Franz}
\newline MIT, Department of Mathematics, 77 Massachusetts Avenue, Cambridge, MA 02139, USA
\newline
\textit{E-mail address:} \texttt{gfranz@mit.edu}
\newline
\par
{\scshape \noindent Federico Trinca}
\newline Mathematical Institute, University of Oxford, Woodstock Road, Oxford, OX2 6GG, United Kingdom
\newline
\textit{E-mail address:} \texttt{federico.trinca@maths.ox.ac.uk}
\par
}} 
\begin{document}
\maketitle

\thispagestyle{empty}

\begin{abstract}
Given an $n$-dimensional Riemannian sphere conformal to the round one and $\delta$-pinched, we show that it does not contain any closed stable minimal submanifold of dimension $2\le k\le n-\delta^{-1}$. 
\end{abstract}

\section{Introduction}

Given a compact Riemannian manifold $M^n$ with dimension $n\ge 3$, a closed $k$-dimensional minimal submanifold $\Sigma^k$ in $M$ is defined as a critical point of the $k$-volume.
Hence, it is natural to look at what happens to the second derivative of the $k$-volume at $\Sigma$.
Of particular interest is the case when this second derivative is nonnegative, if so we say that $\Sigma$ is \emph{stable}.
Note that, if a submanifold is a minimizer of the $k$-volume, then it is a stable minimal submanifold.

Lawson and Simons in \cite{LawsonSimons1973}*{pp. 438} conjectured that there are no stable minimal submanifolds in any compact, simply-connected, Riemannian manifold $M$ that is $\frac 14$-pinched.
Here, by $\delta$-pinched for some $\delta > 0$ we mean that at each point of $M$ the sectional curvatures are positive and the ratio between the smallest and the largest sectional curvatures of $M$, at that point, is strictly bigger than~$\delta$.
Observe that, thanks to the sphere theorem \cite{BrendleSchoen2009}*{Theorem 1}, a $\frac 14$-pinched Riemannian manifold $M$ is diffeomorphic to the round unit sphere. 
We remark that this pinching condition is sharp both for the Lawson--Simons conjecture and the sphere theorem. We will come back to this later in the introduction (see \cref{sec:heuristics})

The purpose of the present paper is to investigate this problem in the case where the ambient manifold is conformal to the round unit sphere. In particular, we obtain the following result.

\begin{theorem} \label{thm:main}
Let $(S^n,\gc)$ be a $n$-dimensional Riemannian manifold that is conformal to the round unit sphere and that is $\delta$-pinched for some $\delta>0$. Then $(S^n,\gc)$ does not contain any closed stable minimal $k$-submanifold for all $2\le k\le n-\delta^{-1}$.
\end{theorem}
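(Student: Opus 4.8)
The plan is to follow the classical Lawson–Simons second variation argument, which works by averaging the stability inequality over a cleverly chosen family of vector fields and extracting a sign. Concretely, one uses the round-sphere presentation: write $S^n \subset \R^{n+1}$, and for each fixed $v \in \R^{n+1}$ let $X_v$ be the tangential projection to $S^n$ of the constant field $v$ (equivalently the gradient of the height function $\langle \cdot, v\rangle$). These are conformal fields on the round sphere, and the trick of Lawson and Simons is that when one plugs $X_v$ into the second variation form of the $k$-volume of a minimal $\Sigma^k$ and integrates over $v$ in the unit sphere of $\R^{n+1}$, all the ambient-curvature terms and the divergence terms collapse into an expression of a definite sign, forcing instability unless $k$ is too small or too large. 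The new feature here is the conformal factor $\gc = e^{2\varphi} g_{\mathrm{round}}$, so I would carry out the same averaging but track carefully how the second variation form, the second fundamental form, and the curvature operator transform under a conformal change.

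\medskip

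\noindent\textbf{Key steps in order.} First I would record the conformal transformation laws: how the Levi-Civita connection, the sectional curvatures, and in particular the Simons-type integrand in the second variation of $k$-volume change when $g \mapsto e^{2\varphi} g$. Second, I would compute the second variation of the $k$-volume of $\Sigma$ in $(S^n,\gc)$ for the vector fields $X_v$ above (the tangential projections of constant fields, which are no longer conformal for $\gc$ but are still globally defined). Third — the heart of the matter — I would integrate the resulting stability inequalities $Q(X_v,X_v) \ge 0$ over $v$ in the unit sphere of $\R^{n+1}$ and simplify. On the round sphere this average produces (up to positive constants) the quantity $k(n-k) - \sum \text{(sectional curvature sum)}$ integrated over $\Sigma$, which is negative precisely in the pinched range; with the conformal factor the averaged quantity becomes a combination of $\gc$-sectional curvatures and derivatives of $\varphi$, and one must show the $\delta$-pinching hypothesis still forces it to be negative when $2 \le k \le n - \delta^{-1}$. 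Fourth, I would convert the pointwise curvature bounds from the $\delta$-pinching condition into the needed inequality on the averaged integrand: the largest sectional curvature at a point dominates $\delta^{-1}$ times the smallest, and one uses this to bound the sum of $k$-dimensional "curvature traces" appearing after averaging, getting the crucial factor $k - (n-k)\delta > 0$, i.e. $k > (n-k)\delta$, equivalently $k > n\delta/(1+\delta)$; one checks this is implied by $k \le n - \delta^{-1}$ together with $k \ge 2$ — or rather identifies which of the two endpoint constraints is the operative one and verifies the arithmetic.

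\medskip

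\noindent\textbf{Main obstacle.} I expect the principal difficulty to be controlling the conformal-derivative terms in the averaged second variation. Unlike on the round sphere, the averaged integrand will contain terms involving $\grad \varphi$ and $\Hess \varphi$ (and the mean curvature of $\Sigma$ vanishes only with respect to $\gc$, not the background round metric), and it is not obvious a priori that these extra terms have a favorable sign or can be absorbed. The resolution should be that, after integrating over $v$ and using the minimality of $\Sigma$ in $(S^n,\gc)$ together with an integration by parts on $\Sigma$, the first-derivative-of-$\varphi$ terms either cancel or reorganize into the genuine $\gc$-sectional curvatures — essentially because the averaging of $X_v \otimes X_v$ over $v$ reconstructs the metric, so the whole averaged expression must be expressible intrinsically in terms of $\gc$-curvature, which is then pinched by hypothesis. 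Making this reorganization precise, and confirming that no boundary or lower-order error survives, is the step I would budget the most care for; the final arithmetic with $\delta$ and $k$ is then routine.
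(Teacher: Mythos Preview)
Your overall strategy---averaging the $\gc$-second-variation form over the Lawson--Simons family of projected constant fields and tracking the conformal transformation laws---matches the paper's, and you have correctly located the main obstacle in the $\nabla f$, $\Hess f$ terms. But your proposed resolution of that obstacle has a genuine gap.

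You predict that after averaging and an integration by parts, the conformal-derivative contributions ``reorganize into the genuine $\gc$-sectional curvatures,'' so that the averaged integrand is expressible purely in terms of $\gc$-curvature. This is not what happens. The paper shows that the traced integrand equals
\[
-\tilde K_{S^n}(\Sigma, N\Sigma) \;+\; \tfrac{1}{k}\,|H|^2_{\gb}\, e^{-2f},
\]
where $H$ is the mean curvature of $\Sigma$ with respect to the \emph{round} metric $\gb$ (equivalently, the extra term is $k|\nabla^\perp f|^2_{\gb}e^{-2f}$). This residual is not a $\gc$-curvature quantity and does not vanish. Your heuristic that ``averaging $X_v\otimes X_v$ reconstructs the metric, so the result must be $\gc$-intrinsic'' fails because the fields $X_v$ are adapted to the round metric, not to $\gc$; indeed the paper first rescales them to $e^{-f}X_v$ precisely to get a clean trace, and even then the $|H|^2_{\gb}$ term survives.

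The missing idea is a separate estimate for $\int_\Sigma |\nabla^\perp f|^2_{\gb}\, e^{(k-2)f}\,d\gb$. The paper obtains it by summing the conformal sectional-curvature formula over all \emph{tangent--tangent} pairs $(E_i,E_j)$ with $i\neq j$, multiplying by the weight $e^{(k-2)f}$, integrating over $\Sigma$, and applying the divergence theorem together with $H = k\nabla^\perp f$ to convert the $\div_\Sigma(\nabla f)$ term into $|\nabla^\perp f|^2_{\gb}$. This yields
\[
\int_\Sigma \sum_{i\neq j} \tilde K_{S^n}(E_i,E_j)\,d\gc \;>\; k(k-1)\int_\Sigma |\nabla^\perp f|^2_{\gb}\, e^{(k-2)f}\,d\gb,
\]
which is exactly the control needed on the residual term. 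This step requires $k\ge 2$ essentially (one needs two independent tangent directions to form a sectional curvature), which explains the lower endpoint in the theorem; your proposal does not anticipate it.

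Finally, your arithmetic is inverted. After the above estimate the traced second variation is bounded above by $\int_\Sigma\bigl(-k(n-k)\min_\pi\tilde K + k\max_\pi\tilde K\bigr)\,d\gc$, so negativity needs $(n-k)\min\tilde K > \max\tilde K$; $\delta$-pinching then gives the \emph{upper} bound $k \le n-\delta^{-1}$, not the lower bound $k > n\delta/(1+\delta)$ that your factor $k-(n-k)\delta$ would produce.
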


\begin{remark}
Observe that \cref{thm:main} confirms Lawson--Simons conjecture in conformal spheres $S^n$ for all $k$-submanifolds with dimension $2\le k\le n-4$.
\end{remark}

\subsection{Previous results}

The original motivation behind Lawson--Simons conjecture arises from the fact that there are no closed stable minimal submanifolds in the round $n$-dimensional sphere. This was proven by Simons in \cite{Simons1968}*{Theorem 5.1.1}, and generalized to the nonexistence of stable stationary varifolds in Corollary 1 of the aforementioned paper \cite{LawsonSimons1973}.
The idea of the proof is to show that, given a minimal $k$-submanifold $\Sigma^k$, the trace of the second derivative of the $k$-volume on the space of variations generated by constant vector fields is negative. Hence, there is at least one constant vector field that generates a variation along which the $k$-volume of $\Sigma$ decreases, which implies that $\Sigma$ is not stable.

The Lawson--Simons conjecture in its full generality is still open, but there have been several partial results in its support. 
\begin{itemize}
\item \emph{Minimal two-spheres.} Let us assume that $\Sigma^2$ is a minimal surface homeomorphic to a two-sphere in a compact, simply-connected, $\frac 14$-pinched Riemannian manifold $M^n$.

In \cite{Aminov1976}*{Theorem 1}, Aminov proved that, under these assumptions, $\Sigma$ is unstable. The proof consists in constructing two orthogonal variations, determined by a system of differential equations \cite{Aminov1976}*{(10)}, with negative average of the second derivative of the area. 

Aminov argument was then refined by Micallef and Moore \cite{MicallefMoore1988}*{Theorem 1}, who obtained a positive lower bound on the number of negative directions of the second derivative of the area (the \emph{Morse index}) at $\Sigma$ as above. More precisely, they proved that the Morse index of $\Sigma$ in $M$ is strictly bigger than $(n-3)/2$, thus the lower bound only depends on the dimension of the ambient manifold.  
\item \emph{Perturbations of the round metric.} If $S^n$ is endowed with a metric which is $C^2$-close to the round one, then, Howard and Wei in \cite{HowardWei2015}*{Theorem 1} proved that there are no stable minimal submanifolds in it. Note that \cite{HowardWei2015} circulated as a preprint since 1983.

Quantitative versions of this result were proven by Okayasu and Howard. More precisely, given $g\ge 1$, Okayasu in \cite{Okayasu1989}*{Theorem A} showed that there exists $\varepsilon'=\varepsilon'(n,g)>0$ such that, if $S^n$ is $(1-\varepsilon')$-pinched, then it does not contain any closed stable minimal surface of genus $g$. 
Moreover, given $k\ge 1$, Howard in \cite{Howard1985}*{Theorem~1} proved the existence of $\varepsilon''=\varepsilon''(n,k)>0$ such that, if $S^n$ is $(1-\varepsilon'')$-pinched, then there are no closed stable minimal submanifolds of dimension $k$. 
Unfortunately, it holds that $\lim_{g\to\infty} \varepsilon'(n,g) = 0$ for all $n\ge 3$, and that $\lim_{n\to\infty}\max_{1\le k < n} \varepsilon''(n,k) = 0$.

\item \emph{Ambient manifold immersed in $\R^{n+1}$.} The last case that was considered is when the ambient $n$-dimensional topological sphere can be isometrically immersed in $\R^{n+1}$. Under this additional condition, the Lawson--Simons conjecture was proven by Shen and Xu in \cite{ShenXu2000}*{Theorem 2}. Further results with better pinching conditions were also proven (see \cites{ShenHe2001,HuWei2003}).
\end{itemize}

\begin{remark}
In \cite[Theorem 1]{ShenXu2000}, Shen and Xu claimed that there are no stable minimal submanifolds in a complete simply-connected $0.77$-pinched Riemannian manifold. However, we are actually unable to check the validity of the second equality in (3.6).
\end{remark}

What we do in \cref{thm:main} is to consider a setting transversal to all the previous ones, namely the case where the ambient manifold is conformal to the round unit sphere and satisfies some pinching condition.
Note that such an ambient manifold of dimension $n$ can be isometrically immersed in $\R^{n+1}$ if and only if it satisfies some local rigidity properties by \cite{NishikawaMaeda1974}*{Theorem 4} (see also Remark 2 after the theorem therein). Hence, our result is not a consequence of \cite[Theorem 2]{ShenXu2000}.

\subsection{Heuristics and further developments} \label{sec:heuristics}

Apart from the aforementioned works concerning the Lawson--Simons conjecture, there are other results on the stability of minimal submanifolds that are worth noticing. First of all, it is immediate to see that, in a space of positive sectional curvature, a minimal submanifold with a nontrivial $\nabla^\perp$-parallel section of the normal bundle is unstable. In particular, this section always exists on hypersurfaces with trivial normal bundle (see the \emph{stability inequality}). As a consequence, the Lawson--Simons conjecture trivially holds for $n=2$, and more generally for two-sided hypersurfaces.

\begin{remark}
The case of minimal hypersurfaces is actually much more studied and understood than the higher codimensional one. 
In particular, there are many results relating the Morse index and the topology of a minimal hypersurface (see e.g. \cite{Ros2006,Savo2010,AmbrozioCarlottoSharp2018Index, Souam2015}). Note that \cite{Souam2015} deals with the conformal case as in the present work.
\end{remark}

Another important special case is the geodesics one. Indeed, the celebrated theorem of Synge (see \cite[Theorems 21 and 26]{Petersen2016}) says that geodesics in even dimensional compact simply-connected Riemannian manifolds of positive sectional curvature are unstable. 
However, this cannot be true in odd dimensions. Indeed, Ziller \cite[Example 1]{Ziller1977} showed that there is a stable closed geodesic in each Berger $3$-sphere that is $\delta$-pinched with $\delta\in(0,\frac 15)$. 

As mentioned above, the pinching condition of the Lawson--Simons conjecture is sharp. Indeed, the complex projective space is a compact simply connected Riemannian manifold with sectional curvature between $1/4$ and $1$, included, that admits a large class of minimizers for the volume functional: the complex submanifolds. Being homologically volume minimizers, complex submanifolds are intimately related to the homology of the ambient manifold. In light of this discussion, Howard and Wei \cite[Conjecture B]{HowardWei2015} conjectured that homological spheres with positive sectional curvature admit no stable minimal submanifolds. 
However, this conjecture cannot hold in this great generality, as showed by Ziller's example described above. More in general, Torralbo--Urbano in \cite{TorralboUrbano2022} proved that any odd-dimensional Berger sphere that is $\delta$-pinched with $\delta\in(0,\frac{1}{4k+1})$ contains $k$-dimensional stable minimal submanifolds. Observe that this is compatible with the pinching obtained by Ziller for $k=1$.

\subsection{Idea of the proof and structure of the paper}

The idea behind \cref{thm:main} is inspired by Simons' proof, adequately adapted to the conformal case.  Indeed, we consider the constant vector fields used by Simons, and we rescale them so that they are orthonormal under the conformal change of metric. Given a $k$-submanifold that is minimal with respect to the conformal metric, we take the trace of the second derivative of the conformal $k$-volume over the space of variations generated by these vector fields, and we express such a trace in terms of objects relative to the round metric. 

Now, we discuss the novel key ideas of this paper.
First, we observe that Simons' estimate for the operator associated to the second variation over the space $\mathcal{V}$ of constant vector fields holds for \emph{nonminimal} submanifolds when we trace over $\mathcal V$ (see \cref{thm:LawsonSimonsClosed}). This way, we are able to obtain an expression for the aforementioned trace depending only on the curvature of the conformal sphere and the norm of the mean curvature of the submanifold with respect to the round metric (see \cref{thm:TraceSecondVariationCC}). We conclude estimating the aforesaid norm with terms involving the curvature of the conformal sphere.
This final step consists in applying the divergence theorem to a carefully chosen vector field to recover terms involving the curvature. 
As pointed out by the anonymous referee, who we would like to thank, this also follows from integrating the Gauss equation for the scalar curvature of $\Sigma$, combined with the standard formulas for the scalar curvature and the traceless second fundamental form under a conformal change of metric.

The paper is structured as follows. In \cref{sec:Prelims}, we review standard results from conformal geometry. In particular, we recall how some usual Riemannian geometric quantities transform under a conformal change of metric. We use these results in \cref{sec:2VUnderConformal} to compute the operator associated to the second variation of the conformal $k$-volume in terms of the same operator with respect to the round metric. Finally, \cref{sec:ProofMainThm} is devoted to the proof of \cref{thm:main}. Specifically, we trace over the space of rescaled constant vector fields, and we estimate the norm of the mean curvature of the submanifold with respect to the round metric.

\subsection*{Acknowledgements}
This project originated from the Master thesis of the second author under the supervision of Alessandro Carlotto, whom we would like to thank for suggesting the problem and for all the valuable feedback.
We would also like to thank Robert Bryant for pointing out the reference \cite{NishikawaMaeda1974}.
This project has received funding from the European Research Council (ERC) under the European Union’s Horizon 2020 research and innovation programme (grant agreement No. 947923).

\section{Setting and notation} \label{sec:notation}

Let $(M^n,g)$ be an $n$-dimensional compact Riemannian manifold, and assume that $\tilde g\eqdef e^{2f}g$ is a Riemannian metric on $M$ conformal to $g$ with conformal factor $e^{2f}$, for some function $f\in C^\infty(M)$.
Moreover, let $\Sigma^k\subset M$ be a closed $k$-dimensional submanifold of $M$.
Note that we can view $\Sigma^k$ as a Riemannian submanifold both of $(M,\gb)$ and of $(M,\gc)$. Obviously, the metric induced by $\gc$ is conformal to the one induced by $\gb$ with conformal factor $e^{2f}$.

In this setting we adopt the following notation.
\begin{itemize}
\item $\vf(M)$ is the set of vector fields on $M$.
\item $X^\perp\in\Gamma(N\Sigma)$ is the normal component to $\Sigma$ of a vector field $X\in\vf(M)$, where $\Gamma(N\Sigma)$ denotes the sections of the normal bundle of $\Sigma$. Observe that the normal component with respect to $g$ is the same as the normal component with respect to the conformal metric $\tilde g$.
\item $X^\top = X-X^\perp$ is the tangent component to $\Sigma$ of a vector field $X\in\vf(M)$. Again this does not depend on the choice of metric in a conformal class.
\item $\{E_\alpha\}_{\alpha=1}^n$ will often denote a local orthonormal basis of $M$ with respect to $g$. When this basis is defined around a point $p\in\Sigma$, then at $p$ it will be chosen such that $\{E_i\}_{i=1}^k$ is an orthonormal basis of $\Sigma$ and $\{E_r\}_{r=k+1}^n$ is an orthonormal basis of the normal to $\Sigma$.
\item $\nabla$, $\tilde \nabla$ are the Levi-Civita connections on $M$ with respect to $g$ and $\tilde g$, respectively.
\item $\nabla^\perp$ is the Levi-Civita connection on the normal bundle of $\Sigma$. In particular, $\nabla^\perp_XV = (\nabla_XV)^\perp$ for all $X\in\vf(\Sigma)$ and $V\in\Gamma(N\Sigma)$.
\item $R_M$, $\tilde R_M$ are the Riemann curvature tensors on $M$ with respect to $g$ and $\tilde g$, respectively. Here, we use the convention
\[
R_M(X,Y)Z = \nabla_Y\nabla_XZ - \nabla_X\nabla_YZ + \nabla_{[X,Y]}Z
\]
for all $X,Y,Z\in\vf(M)$.
\item If $\pi\subset T_p M$ is a two-plane of $M$, then, $K_M(\pi)$, $\tilde K_M(\pi)$ are the sectional curvatures of $\pi$ on $M$ with respect to $g$ and $\tilde g$, respectively. If $\pi$ is spanned by $X,Y\in T_pM$, then, $K_M(\pi)=K_M(X,Y)$ and $\tilde{K}_M(\pi)=\tilde{K}_M (X,Y)$.
\item We say that $(M,\tilde g)$ is \emph{$\delta$-pinched} for some $\delta>0$ if at each point $p\in M$ and for every pair of two-planes $\pi_1,\pi_2\subset T_p M$, we have  $0<\delta \tilde{K}_M(\pi_1)< \tilde{K}_M(\pi_2)$. In particular, for all $p\in M$, it holds
\[
0<\delta \max_{\pi\subset T_pM}\tilde{K}_M(\pi)< \min_{\pi\subset T_pM}\tilde{K}_M(\pi).
\]
\item $A$, $\tilde A$ are the second fundamental forms of $\Sigma$ on $M$ with respect to $g$, $\tilde g$, respectively. Moreover, taking the traces of $A$ and $\tilde A$ over $\Sigma$, we obtain the mean curvature vectors $H$, $\tilde H$ of $\Sigma$ on $M$ with respect to the two metrics $g$, $\tilde g$.
\end{itemize}

Observe that, in \cref{sec:ProofMainThm}, we specialize to the case where $(M^n,g)$ is the unit sphere $S^n$ with the round metric and $\Sigma$ is a minimal submanifold with respect to $\tilde g$.
Note that, under these assumptions, we have that $\tilde H =0$, but the mean curvature $H$ with respect to $g$ does not need to vanish.

\section{Preliminaries in conformal geometry}\label{sec:Prelims}

Let us assume to be in the setting described in the previous section, namely an $n$-dimensional Riemannian manifold $M^n$ with conformal metrics $\gc = e^{2f}\gb$.
The following well-known results relate the geometry of the conformal metric $\gc$ to the original one $\gb$.

\begin{lemma}\label{lem:ConformalChangeBasis}
In the setting above, let $\{E_\alpha\}_{\alpha=1}^n$ be an orthonormal frame of $(M,\gb)$ defined on an open subset $U\subset M$. Then, $\{\tilde{E}_\alpha\}_{\alpha=1}^n$ is an orthonormal frame of $(M,\gc)$ defined on $U\subset M$, where $\tilde{E}_\alpha\eqdef e^{-f}E_\alpha$ for all $\alpha\in\{1,\ldots,n\}$.
\end{lemma}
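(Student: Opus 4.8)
The statement is an immediate consequence of the definition of the conformal metric, so the plan is just to verify pointwise $\gc$-orthonormality of the rescaled frame. First I would recall the two relevant facts from the hypotheses: $\gc = e^{2f}\gb$ and $\tilde{E}_\alpha \eqdef e^{-f}E_\alpha$, where $f \in C^\infty(M)$. Then, for any $\alpha,\beta \in \{1,\ldots,n\}$ and at any point of $U$, I would compute
\[
\gc(\tilde{E}_\alpha, \tilde{E}_\beta) = e^{2f}\,\gb\bigl(e^{-f}E_\alpha,\, e^{-f}E_\beta\bigr) = e^{2f}\,e^{-2f}\,\gb(E_\alpha, E_\beta) = \gb(E_\alpha, E_\beta) = \delta_{\alpha\beta},
\]
where the second equality uses bilinearity of $\gb$, and the last uses that $\{E_\alpha\}_{\alpha=1}^n$ is $\gb$-orthonormal. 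Since $e^{-f}$ is a smooth, nowhere-vanishing function on $M$, each $\tilde{E}_\alpha$ is a smooth vector field on $U$; being $n$ pointwise $\gc$-orthonormal vector fields, $\{\tilde{E}_\alpha\}_{\alpha=1}^n$ is then an orthonormal frame of $(M,\gc)$ on $U$.

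The argument needs no deeper input — no curvature computation, no properties of $\Sigma$, and no conformal transformation rule for the connection — and there is correspondingly no real obstacle. The only point to keep track of is which metric each inner product refers to, together with the observation that the conformal factor $e^{2f}$ cancels exactly against the two factors of $e^{-f}$ produced by the rescaling $E_\alpha \mapsto e^{-f}E_\alpha$. This cancellation is precisely what motivates the definition $\tilde{E}_\alpha \eqdef e^{-f}E_\alpha$, and it is the elementary building block on which the later, less trivial, conformal transformation formulas in this section are assembled.
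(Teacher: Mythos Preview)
Your proof is correct; the paper itself states this lemma without proof, treating it as an immediate consequence of the definitions, which is exactly what your verification shows.
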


\begin{proposition}[{cf. \cite[Theorem~1.159]{Besse1987}}] \label{prop:UnderConformalChange} 
In the setting above, we have that
\begin{enumerate} [label={\normalfont(\roman*)}]
    \item\label{ucc:conn} the Levi-Civita connection of $(M,\gb)$ is related to the Levi-Civita connection of $(M,\gc)$ by 
    $$\tilde{\nabla}_{X} Y=\nabla_X Y +X(f) Y+Y(f) X-\gb(X,Y) \nabla f$$
    for all vector fields $X,Y\in \vf(M)$;
    \item\label{ucc:curv} the Riemann curvature tensor of $(M,\gb)$ is related to the Riemann curvature tensor of $(M,\gc)$ by
        \begin{align*}
            \tilde{R}_M&(X,Y) Z= R_M(X,Y)Z +X(f)Z(f)Y-Y(f)Z(f)X + {} \\
              &\pheq{}-X(f) \gb(Y,Z) \nabla f+Y(f) \gb(X,Z) \nabla f-\gb(X,Z) \nabla_Y \nabla f+ \gb(Y,Z) \nabla_X \nabla f + {}\\
              &\pheq {}-\gb(X,Z)\abs{\nabla f}^2_{\gb}Y+ \gb(Y,Z)\abs{\nabla f}^2_{\gb} X-\Hess f (X,Z) Y+\Hess f(Y,Z) X
         \end{align*}
         for all $X,Y,Z\in\vf(M)$;
    \item\label{ucc:sect} the sectional curvature of $(M,\gb)$ is related to the sectional curvature of $(M,\gc)$ by
        $$e^{2f} \tilde{K}_M (X,Y)=K_M (X,Y)+X(f)^2+Y(f)^2-\abs{\nabla f}^2_{g} -\Hess f(X,X) -\Hess f (Y,Y)$$
         for all $X,Y\in \vf(M)$ that are orthonormal with respect to $\gb$;
    \item the volume element $d\gb$ of $(M,\gb)$ is related to the volume element $d\gc$ of $(M,\gc)$ by 
         $$d\gc=e^{nf}d\gb.$$
\end{enumerate}
\end{proposition}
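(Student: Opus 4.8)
All four identities are classical conformal-change formulas (see \cite[Theorem~1.159]{Besse1987}); the plan is a routine, if somewhat lengthy, computation: I would derive \ref{ucc:conn} from the Koszul formula, deduce \ref{ucc:curv} by expanding the definition of $\tilde R_M$, obtain \ref{ucc:sect} by contracting \ref{ucc:curv} with the help of \cref{lem:ConformalChangeBasis}, and check the volume identity in a local chart. For \ref{ucc:conn} I would write the Koszul formula for $\tilde\nabla$, substitute $\gc = e^{2f}\gb$, and use $X(e^{2f}) = 2e^{2f}X(f)$ together with the Koszul formula for $\nabla$: every term carries a factor $e^{2f}$, the bracket terms reassemble into $2e^{2f}\gb(\nabla_X Y, Z)$, and the derivatives of $e^{2f}$ produce $2e^{2f}\big(X(f)\gb(Y,Z) + Y(f)\gb(X,Z) - Z(f)\gb(X,Y)\big) = 2e^{2f}\gb\big(X(f)Y + Y(f)X - \gb(X,Y)\nabla f,\ Z\big)$; since $Z$ is arbitrary and $\gc$ is nondegenerate, \ref{ucc:conn} follows.

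For \ref{ucc:curv} I would set $B(X,Y) \eqdef X(f)Y + Y(f)X - \gb(X,Y)\nabla f$, so that $\tilde\nabla = \nabla + B$ with $B$ symmetric and tensorial by \ref{ucc:conn}, and plug this into the paper's definition $\tilde R_M(X,Y)Z = \tilde\nabla_Y\tilde\nabla_X Z - \tilde\nabla_X\tilde\nabla_Y Z + \tilde\nabla_{\comm{X}{Y}} Z$. Collecting by the number of factors of $B$, and using that $\nabla$ is torsion-free, this gives
\[
\tilde R_M(X,Y)Z = R_M(X,Y)Z + (\nabla_Y B)(X,Z) - (\nabla_X B)(Y,Z) + B\big(Y, B(X,Z)\big) - B\big(X, B(Y,Z)\big).
\]
Expanding $(\nabla_Y B)(X,Z) = \nabla_Y(B(X,Z)) - B(\nabla_Y X, Z) - B(X, \nabla_Y Z)$ and using $\gb(\nabla_Y\nabla f, W) = \Hess f(Y,W) = \Hess f(W,Y)$, the difference $(\nabla_Y B)(X,Z) - (\nabla_X B)(Y,Z)$ collapses to $\Hess f(Y,Z) X - \Hess f(X,Z) Y - \gb(X,Z)\nabla_Y\nabla f + \gb(Y,Z)\nabla_X\nabla f$; expanding the quadratic part and using $\nabla f(f) = \abs{\nabla f}^2_{\gb}$, the difference $B(Y, B(X,Z)) - B(X, B(Y,Z))$ collapses to $X(f)Z(f)Y - Y(f)Z(f)X - \gb(X,Z)\abs{\nabla f}^2_{\gb}Y + \gb(Y,Z)\abs{\nabla f}^2_{\gb}X - X(f)\gb(Y,Z)\nabla f + Y(f)\gb(X,Z)\nabla f$. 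Summing the two gives the asserted formula. This bookkeeping, and in particular matching the signs induced by the paper's curvature convention, is the only real obstacle; it is otherwise entirely mechanical.

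For \ref{ucc:sect}, let $X,Y$ be $\gb$-orthonormal; by \cref{lem:ConformalChangeBasis} the vectors $e^{-f}X, e^{-f}Y$ are $\gc$-orthonormal and span the same two-plane, so, since $\tilde R_M$ is a $(1,3)$-tensor and $\gc = e^{2f}\gb$, we get $e^{2f}\tilde K_M(X,Y) = e^{2f}\gc\big(\tilde R_M(e^{-f}X, e^{-f}Y)e^{-f}X, e^{-f}Y\big) = \gb\big(\tilde R_M(X,Y)X, Y\big)$. Then I would put $Z = X$ in \ref{ucc:curv}, pair with $Y$, and simplify using $\gb(X,X) = \gb(Y,Y) = 1$, $\gb(X,Y) = 0$, $\gb(\nabla f, X) = X(f)$, and $\gb(\nabla_Y\nabla f, Y) = \Hess f(Y,Y)$: all the extra terms either vanish or combine, leaving $\gb(R_M(X,Y)X, Y) + X(f)^2 + Y(f)^2 - \abs{\nabla f}^2_{\gb} - \Hess f(X,X) - \Hess f(Y,Y)$, which is \ref{ucc:sect} since $\gb(R_M(X,Y)X,Y) = K_M(X,Y)$ for $\gb$-orthonormal $X,Y$ under this convention. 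Finally, in any local chart $d\gc = \sqrt{\det(\gc_{ij})}\,dx = \sqrt{\det(e^{2f}\gb_{ij})}\,dx = e^{nf}\sqrt{\det(\gb_{ij})}\,dx = e^{nf}\,d\gb$, which is the last identity.
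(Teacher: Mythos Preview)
Your proposal is correct; each of the four computations is standard and your bookkeeping checks out against the paper's sign convention for $R_M$. Note, however, that the paper does not actually prove \cref{prop:UnderConformalChange}: it simply records these identities with a reference to \cite[Theorem~1.159]{Besse1987} and moves on, so there is no ``paper's own proof'' to compare against---you have supplied the details that the paper omits.
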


The following lemma shows how the mean curvatures with respect to $g$ and $\tilde g$ of a $k$-dimensional submanifold of $M$ are related.
\begin{lemma} \label{lem:MeanCurvatureCC}
Let $\Sigma^k$ be a submanifold of $M$. Then, the mean curvature $H$ of $\Sigma$ with respect to $\gb$ is related to the mean curvature $\tilde{H}$ of $\Sigma$ with respect to $\gc$, by
\begin{align*}
    H=e^{2f} \tilde{H}+k\nabla^\perp f.
\end{align*}
\end{lemma}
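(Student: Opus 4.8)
The plan is to work pointwise at a point $p\in\Sigma$ with the adapted orthonormal frame $\{E_\alpha\}$ of $(M,\gb)$ described in \cref{sec:notation}, so that $\{E_i\}_{i=1}^k$ spans $T_p\Sigma$ and $\{E_r\}_{r=k+1}^n$ spans $N_p\Sigma$. By \cref{lem:ConformalChangeBasis}, $\{\tilde E_\alpha\}=\{e^{-f}E_\alpha\}$ is then an orthonormal frame for $(M,\gc)$ adapted to $\Sigma$ in the same way, since the tangent/normal splitting is conformally invariant. The second fundamental forms are
\[
A(X,Y)=(\nabla_X Y)^\perp,\qquad \tilde A(X,Y)=(\tilde\nabla_X Y)^\perp,
\]
for $X,Y$ tangent to $\Sigma$, and the mean curvature vectors are their traces over $\Sigma$, namely $H=\sum_{i=1}^k A(E_i,E_i)$ and $\tilde H=\sum_{i=1}^k \tilde A(\tilde E_i,\tilde E_i)$.

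First I would compute $\tilde A(\tilde E_i,\tilde E_i)$ using \cref{prop:UnderConformalChange}\ref{ucc:conn}. Applying the connection formula with $X=Y=\tilde E_i=e^{-f}E_i$ and taking the normal component:
\begin{align*}
\tilde A(\tilde E_i,\tilde E_i)
&=\bigl(\tilde\nabla_{\tilde E_i}\tilde E_i\bigr)^\perp
=\bigl(\nabla_{\tilde E_i}\tilde E_i+2\tilde E_i(f)\,\tilde E_i-\gb(\tilde E_i,\tilde E_i)\,\nabla f\bigr)^\perp\\
&=e^{-2f}\bigl(\nabla_{E_i}E_i\bigr)^\perp-e^{-2f}(\nabla f)^\perp
=e^{-2f}\bigl(A(E_i,E_i)-(\nabla f)^\perp\bigr),
\end{align*}
where I used that $\nabla_{\tilde E_i}\tilde E_i=e^{-f}\nabla_{E_i}(e^{-f}E_i)=e^{-2f}\nabla_{E_i}E_i-e^{-2f}E_i(f)E_i$, whose $E_i$-term is tangential and drops out together with the $\tilde E_i(f)\tilde E_i$ term under $(\cdot)^\perp$, and that $\gb(\tilde E_i,\tilde E_i)=e^{-2f}$.

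Then I would sum over $i=1,\dots,k$: since $(\nabla f)^\perp=\nabla^\perp f$ by definition of the normal connection (or rather, the normal component of the ambient gradient is what the paper denotes $\nabla^\perp f$), this gives
\[
\tilde H=\sum_{i=1}^k \tilde A(\tilde E_i,\tilde E_i)=e^{-2f}\Bigl(\sum_{i=1}^k A(E_i,E_i)-k\,(\nabla f)^\perp\Bigr)=e^{-2f}\bigl(H-k\nabla^\perp f\bigr).
\]
Rearranging yields $H=e^{2f}\tilde H+k\nabla^\perp f$, as claimed. Since this is a pointwise identity established at an arbitrary $p\in\Sigma$, it holds on all of $\Sigma$.

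The computation is essentially routine; the only point requiring a little care is bookkeeping the tangential-versus-normal parts of the extra terms in \cref{prop:UnderConformalChange}\ref{ucc:conn} — the terms proportional to $\tilde E_i$ are tangent to $\Sigma$ and vanish under $(\cdot)^\perp$, while only the $-\gb(X,Y)\nabla f$ term and the intrinsic part of $\nabla_{E_i}E_i$ contribute to the normal component — and correctly identifying $(\nabla f)^\perp$ with $\nabla^\perp f$ in the paper's notation. One should also note the conformal factors: $\tilde A(\tilde E_i,\tilde E_i)$ carries an $e^{-2f}$ (one factor $e^{-f}$ from each $\tilde E_i$), which is exactly what converts $e^{-2f}$ back to $e^{2f}$ upon solving for $H$.
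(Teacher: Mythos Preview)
Your proof is correct and follows essentially the same approach as the paper: both use the adapted orthonormal frames from \cref{lem:ConformalChangeBasis} together with \cref{prop:UnderConformalChange}\ref{ucc:conn}, discarding the tangential terms under $(\cdot)^\perp$. The only cosmetic difference is direction: the paper starts from $H=\sum_i(\nabla_{E_i}E_i)^\perp$, rewrites $E_i=e^f\tilde E_i$, and converts $\nabla$ to $\tilde\nabla$, whereas you start from $\tilde H=\sum_i(\tilde\nabla_{\tilde E_i}\tilde E_i)^\perp$ and go the other way; the computations are inverses of one another.
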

\begin{proof}
Given a local orthonormal frame $\{E_{\alpha}\}_{\alpha=1}^n$ of $(M,\gb)$ such that $\{E_i\}_{i=1}^k$ are tangent to $\Sigma$, we can use \cref{lem:ConformalChangeBasis} to obtain a local $\gc$-orthonormal frame $\{\tilde{E}_{\alpha}\}_{\alpha=1}^n$, which will also have the first $k$ vectors tangent to $\Sigma$. We deduce 
\begin{align*}
    H &=\sum_{i=1}^k \left(\nabla_{E_i} E_i\right)^{\perp}=\sum_{i=1}^k e^f \left( \nabla_{\tilde{E}_i} (e^f \tilde{E}_i)\right)^{\perp}=\sum_{i=1}^k e^f \tilde{E}_i (e^f) \left(\tilde{E}_i\right)^{\perp} +\sum_{i=1}^k e^{2f} \left(\nabla_{\tilde{E}_i} \tilde{E}_i\right)^{\perp}\\
    &=e^{2f} \sum_{i=1}^k (\tilde{\nabla}_{\tilde{E}_i} \tilde{E}_i)^{\perp}+e^{2f} \sum_{i=1}^k \gb(\tilde{E}_i,\tilde{E}_i) (\nabla f)^{\perp}=e^{2f} \tilde{H}+k \nabla^\perp f,
\end{align*}
where we used \cref{prop:UnderConformalChange}\ref{ucc:conn}, the properties of the connection and the fact that $\perp$ does not depend on conformal changes.
\end{proof}

\section{Second variation of the \texorpdfstring{$k$}{k}-volume after a conformal change of metric}\label{sec:2VUnderConformal}

Let $\Sigma^k$ be a $k$-dimensional submanifold of $M$.
Consider the quadratic operator on the normal bundle of $\Sigma$ defined, for all $V\in\Gamma(N \Sigma)$, as follows
\begin{align*}
    \tilde {Q}^{\Sigma}(V,V)\eqdef \int_{\Sigma} \tilde{q}^{\Sigma} (V,V)\de \gc ,
\end{align*}
where $\tilde q^\Sigma$ is, at each given point, the quadratic operator given by
\[
\tilde{q}^{\Sigma} (V,V)\eqdef \lvert\tilde{\nabla}_{\Sigma}^\perp V\rvert^2_\gc- \widetilde \tr_{\Sigma} (\tilde{R}_M(V,\cdot)V)-
    \lvert \gc(\tilde{A}(\cdot,\cdot),V) \rvert^2.
\]
Here, $ \widetilde\tr_{\Sigma}$ denotes the trace on $\Sigma$ with respect to the metric $\gc$, namely
\[
\widetilde \tr_{\Sigma} (\tilde{R}_M(V,\cdot)V ) = \sum_{i=1}^k \gc(\tilde{R}_M(V,\tilde E_i)V,\tilde E_i),
\]
where $\{\tilde E_i\}_{i=1}^k$ is an orthonormal basis of $\Sigma$ with respect to $\gc$.
Observe that, if $\Sigma$ is minimal with respect to $\gc$, $\tilde {Q}^{\Sigma}$ is the associated stability (or Jacobi) operator (see e.g. \cite[Chapter 1, \S8]{ColdingMinicozzi2011}).
Therefore, $\Sigma$ is unstable if and only if there exists $V\in \Gamma(N\Sigma)$ such that $\tilde Q^\Sigma(V,V) < 0$.

Now assume, as above, that $\gc = e^{2f}\gb$ is conformal to a reference metric $\gb$ on $M$. 
The goal of this section is to express $\tilde Q^\Sigma(V,V)$ for every $V\in\Gamma(N\Sigma)$ in terms of the corresponding operator $Q^\Sigma(V,V)$ with respect to the metric $\gb$, namely
\begin{align*}
    Q^{\Sigma}(V,V)=\int_{\Sigma} q^{\Sigma} (V,V) \de \gb= \int_{\Sigma} \lvert\nabla_{\Sigma}^\perp V\rvert^2_{\gb}- \tr_\Sigma( R_M(V,\cdot)V)-
    \lvert \gb(A(\cdot,\cdot),V) \rvert^2 \de \gb.
\end{align*}

For the rest of the section, we assume that $\{E_\alpha\}_{\alpha=1}^n$ is a local orthonormal frame with respect to $\gb$ such that the first $k$ terms $\{E_i\}_{i=1}^k$ form an orthonormal basis of $\Sigma$.
\begin{lemma}
For every $V\in\Gamma(N\Sigma)$, it holds
\begin{align*}
    \lvert\tilde{\nabla}_{\Sigma}^\perp V\rvert^2_\gc=\lvert\nabla_{\Sigma}^\perp V\rvert^2_{\gb}+  (\nabla^\top f)(\abs{V}^2_\gb)+\abs{\grad^\top f}^2_\gb \abs{V}^2_\gb .
\end{align*}
\end{lemma}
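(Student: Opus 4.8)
The plan is to compute the conformal normal connection $\tilde\nabla^\perp$ applied to a normal field $V$ in terms of the round-metric quantities, using \cref{prop:UnderConformalChange}\ref{ucc:conn}, and then take the squared $\gc$-norm. First I would fix the local $\gb$-orthonormal frame $\{E_i\}_{i=1}^k$ tangent to $\Sigma$, and recall from \cref{lem:ConformalChangeBasis} that $\tilde E_i = e^{-f}E_i$ is the corresponding $\gc$-orthonormal tangent frame. Then, for each $i$, I would expand $\tilde\nabla_{\tilde E_i} V$ via \ref{ucc:conn}: since $\tilde E_i = e^{-f}E_i$, one gets
\[
\tilde\nabla_{\tilde E_i} V = e^{-f}\Bigl(\nabla_{E_i}V + E_i(f)\,V + V(f)\,E_i - \gb(E_i,V)\,\nabla f\Bigr).
\]
Because $V$ is normal and $E_i$ is tangent, $\gb(E_i,V)=0$, so the last term drops. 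Projecting onto the normal bundle (which is the same for $\gb$ and $\gc$), the term $V(f)\,E_i$ is tangent and disappears as well, leaving
\[
\tilde\nabla^\perp_{\tilde E_i} V = e^{-f}\bigl(\nabla^\perp_{E_i}V + E_i(f)\,V\bigr).
\]

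Next I would compute $|\tilde\nabla^\perp_\Sigma V|^2_\gc = \sum_{i=1}^k \gc\bigl(\tilde\nabla^\perp_{\tilde E_i}V,\tilde\nabla^\perp_{\tilde E_i}V\bigr)$. Since $\gc = e^{2f}\gb$, each term contributes $e^{2f}\cdot e^{-2f}\,\gb\bigl(\nabla^\perp_{E_i}V + E_i(f)V,\ \nabla^\perp_{E_i}V + E_i(f)V\bigr)$, so the conformal factors cancel and
\[
|\tilde\nabla^\perp_\Sigma V|^2_\gc = \sum_{i=1}^k \Bigl(|\nabla^\perp_{E_i}V|^2_\gb + 2E_i(f)\,\gb(\nabla^\perp_{E_i}V,V) + E_i(f)^2\,|V|^2_\gb\Bigr).
\]
The first sum is exactly $|\nabla^\perp_\Sigma V|^2_\gb$. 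For the third sum, $\sum_i E_i(f)^2 = |\nabla^\top f|^2_\gb$ since $\{E_i\}$ is an orthonormal basis of $T\Sigma$. For the middle sum, I would use $2\gb(\nabla^\perp_{E_i}V,V) = E_i(|V|^2_\gb)$ (compatibility of $\nabla^\perp$ with the metric on $N\Sigma$, noting $\gb(\nabla_{E_i}V,V)=\gb(\nabla^\perp_{E_i}V,V)$), so $\sum_i E_i(f)\,E_i(|V|^2_\gb) = (\nabla^\top f)(|V|^2_\gb)$, again because $\{E_i\}$ is an orthonormal tangent frame. Assembling the three pieces gives precisely the claimed identity.

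This computation is essentially routine, so there is no serious obstacle; the only point requiring a small amount of care is tracking which terms in the connection formula \ref{ucc:conn} survive the normal projection and which conformal factors appear when passing between $\gb$- and $\gc$-norms (the $e^{-f}$ from rescaling the frame versus the $e^{2f}$ in the metric). One should also make sure the frame-dependent sums $\sum_i E_i(f)^2$ and $\sum_i E_i(f)E_i(|V|^2_\gb)$ are correctly recognized as the tangential-gradient quantities $|\nabla^\top f|^2_\gb$ and $(\nabla^\top f)(|V|^2_\gb)$, which holds because $\{E_i\}_{i=1}^k$ was chosen orthonormal and tangent to $\Sigma$.
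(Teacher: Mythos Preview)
Your proposal is correct and follows essentially the same approach as the paper: compute $\tilde\nabla^\perp_{\tilde E_i}V$ via \cref{prop:UnderConformalChange}\ref{ucc:conn}, take the $\gc$-norm squared, convert to $\gb$-quantities, and identify the three resulting sums. The only cosmetic difference is that you factor out $e^{-f}$ from $\tilde E_i$ at the very start, whereas the paper keeps things written with $\tilde E_i$ a step longer before converting; the computations are otherwise identical.
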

\begin{proof}
Observe that \cref{prop:UnderConformalChange}\ref{ucc:conn} implies
\begin{align*}
    \tilde{\nabla}^\perp_{\tilde{E}_i} V=\left(\nabla_{\tilde{E}_i} V+\tilde{E}_i(f) V\right)^\perp=\nabla^\perp_{\tilde{E}_i} V+\tilde{E}_i(f) V,
\end{align*}
for all $i\in\{1,\ldots,k\}$. Taking the inner product, we compute
\begin{align*}
    \gc(\tilde{\nabla}^\perp_{\tilde{E}_i} V, \tilde{\nabla}^\perp_{\tilde{E}_i} V)&=\gc(\nabla^\perp_{\tilde{E}_i} V,\nabla^\perp_{\tilde{E}_i} V)+\tilde{E}_i(f)^2 \gc(V,V)+2\tilde{E}_i(f)\gc(V,\nabla^\perp_{\tilde{E}_i} V)\\
    &= \gb(\nabla_{E_i}^\perp V,\nabla_{E_i}^\perp V) + E_i(f)^2\gb(V,V) + 2E_i(f) \gb(V,\nabla_{E_i}^\perp V),
\end{align*}
which gives the lemma after summing over $i\in\{1,\ldots,k\}$ and using that
\[
2\sum_{i=1}^k E_i(f) \gb(\nabla_{E_i}^\perp V,V) = 2\gb(\nabla^\perp_{\nabla^\top f}V,V) = 2\gb(\nabla_{\nabla^\top f}V,V) =(\nabla^\top f)(\abs{V}^2_\gb). \qedhere
\]
\end{proof}

\begin{lemma}
For every $V\in\Gamma(N\Sigma)$, it holds
\begin{align*}
    \widetilde \tr_{\Sigma} (\tilde{R}_M(V,\cdot)V)&=\tr_\Sigma(R_M(V,\cdot)V)+kV(f)^2-k\Hess f(V,V)-k\abs{\nabla f}^2_{\gb} \abs{V}^2_\gb +{}\\
    &\pheq -\div_\Sigma(\nabla f) \abs{V}^2_\gb +\abs{\grad^\top f}^2_\gb \abs{V}^2_\gb.
\end{align*}
\end{lemma}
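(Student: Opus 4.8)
The plan is to compute $\widetilde{\tr}_\Sigma(\tilde R_M(V,\cdot)V)$ directly from \cref{prop:UnderConformalChange}\ref{ucc:curv}, substituting $X=Z=V$ and $Y=\tilde E_i$, and then summing over $i\in\{1,\dots,k\}$. First I would recall that $\widetilde{\tr}_\Sigma$ uses the $\gc$-orthonormal frame $\{\tilde E_i\}$, and that $\gc(\tilde R_M(V,\tilde E_i)V,\tilde E_i) = e^{2f}\gb(\tilde R_M(V,\tilde E_i)V,\tilde E_i)$ (since lowering one index with $\gc$ gives a factor $e^{2f}$) while $\tilde E_i = e^{-f}E_i$ contributes $e^{-2f}$; the cleanest route is to write the curvature formula as an identity of $(1,3)$-tensors and pair it with $\tilde E_i$ using $\gc(\cdot,\tilde E_i) = e^{2f}\gb(\cdot, e^{-f}E_i) = e^f\gb(\cdot,E_i)$. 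Either bookkeeping scheme works; the point is that the $e^{2f}$ factors cancel so that $\widetilde{\tr}_\Sigma(\tilde R_M(V,\cdot)V) = \sum_{i=1}^k \gb\big((\text{RHS of }\ref{ucc:curv}\text{ with }X=Z=V,\ Y=E_i)\,,\,E_i\big)$.

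Next I would go through the nine terms on the right-hand side of \ref{ucc:curv} one at a time, pairing each with $E_i$ and summing over $i$. The leading term gives $\tr_\Sigma(R_M(V,\cdot)V)$ by definition. The terms $X(f)Z(f)Y$ and $-\Hess f(X,Z)Y = -\Hess f(V,V)Y$ both have $Y=E_i$ as their vector part, so pairing with $E_i$ and summing gives $kV(f)^2$ and $-k\Hess f(V,V)$ respectively. The terms with vector part $\nabla f$, namely $-X(f)\gb(Y,Z)\nabla f + Y(f)\gb(X,Z)\nabla f$, become $-V(f)\gb(E_i,V)\nabla f + E_i(f)\gb(V,V)\nabla f$; since $V\perp\Sigma$ the first piece vanishes upon using an adapted frame (or rather, $\gb(E_i,V)=0$), and summing the second gives $\sum_i E_i(f)\,\abs{V}^2_\gb\,\gb(\nabla f, E_i) = \abs{\nabla^\top f}^2_\gb\,\abs{V}^2_\gb$. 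Similarly the two Hessian-as-covariant-derivative terms $-\gb(X,Z)\nabla_Y\nabla f + \gb(Y,Z)\nabla_X\nabla f = -\abs{V}^2_\gb\,\nabla_{E_i}\nabla f + \gb(E_i,V)\nabla_V\nabla f$; the second vanishes since $\gb(E_i,V)=0$, and $\sum_i\gb(\nabla_{E_i}\nabla f, E_i) = \div_\Sigma(\nabla f)$, giving $-\div_\Sigma(\nabla f)\,\abs{V}^2_\gb$. Finally the two $\abs{\nabla f}^2_\gb$ terms $-\gb(X,Z)\abs{\nabla f}^2_\gb Y + \gb(Y,Z)\abs{\nabla f}^2_\gb X = -\abs{V}^2_\gb\abs{\nabla f}^2_\gb E_i + \gb(E_i,V)\abs{\nabla f}^2_\gb V$ give, after pairing with $E_i$ and summing, $-k\abs{\nabla f}^2_\gb\,\abs{V}^2_\gb$ (the second term dropping out).

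Collecting these contributions yields exactly
\[
\tr_\Sigma(R_M(V,\cdot)V) + kV(f)^2 - k\Hess f(V,V) - k\abs{\nabla f}^2_\gb\abs{V}^2_\gb - \div_\Sigma(\nabla f)\abs{V}^2_\gb + \abs{\nabla^\top f}^2_\gb\abs{V}^2_\gb,
\]
which is the claimed identity. The main thing to be careful about — rather than a genuine obstacle — is the conformal rescaling of the trace: one must track that the $e^{2f}$ from the metric used in the inner product cancels the $e^{-2f}$ from the two factors $\tilde E_i = e^{-f}E_i$, so that no conformal factor survives in the final formula. A secondary point requiring care is the consistent use of a frame adapted to $\Sigma$ at the point of evaluation, so that $\gb(E_i,V)=0$ can be invoked to kill the several terms whose vector part is $V$ or $\nabla f$ paired against a normal vector; since both sides of the asserted identity are tensorial in $V$, evaluating in such a frame loses nothing.
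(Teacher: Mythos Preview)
Your approach is correct and is exactly the one in the paper: substitute $X=Z=V$, $Y=\tilde E_i$ into \cref{prop:UnderConformalChange}\ref{ucc:curv}, observe that the conformal factors from $\gc(\cdot,\tilde E_i)$ and $\tilde E_i=e^{-f}E_i$ cancel, and sum over $i$. One minor bookkeeping slip: the right-hand side of \ref{ucc:curv} carries ten terms after $R_M(X,Y)Z$, not nine; the two you omit, $-Y(f)Z(f)X$ and $+\Hess f(Y,Z)X$, both have vector part $X=V$ and vanish upon pairing with $E_i$ via the $\gb(E_i,V)=0$ mechanism you already invoke at the end.
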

\begin{proof}
Using \cref{prop:UnderConformalChange}\ref{ucc:curv} and the fact that $\Hess f(X,Y)=\gb(\nabla_Y \nabla f,X)$, we have
\begin{align*}
    \gc(\tilde{R}_M(V,\tilde{E}_i)V,\tilde{E}_i)&=\gc(R_M(V,\tilde{E}_i)V,\tilde{E}_i)+V(f)^2\gc(\tilde{E}_i,\tilde{E}_i)+\tilde{E}_i (f)\gb(V,V)\gc(\nabla f,\tilde{E}_i)+ {}\\
    &\pheq -\gb(V,V)\gc(\nabla_{\tilde{E}_i} \nabla f,\tilde{E}_i)-\abs{\nabla f}^2_{\gb}\gb(V,V) \gc(\tilde{E}_i,\tilde{E}_i)-\Hess f(V,V)\gc(\tilde{E}_i,\tilde{E}_i)\\
    &= \gb(R_M(V,E_i)V,E_i)+V(f)^2+E_i(f)^2\gb(V,V)+{}\\
    &\pheq -\gb(V,V)\Hess f(E_i,E_i)-\abs{\nabla f}^2_{\gb} \gb(V,V)-\Hess f(V,V).
\end{align*}
Summing over $i\in\{1,\ldots,k\}$ we conclude. 
\end{proof}

\begin{lemma}
For every $V\in\Gamma(N\Sigma)$, it holds
\begin{align*}
    \abs{\gc(\tilde{A}(\cdot,\cdot),V)}^2=\abs{\gb ({A}(\cdot,\cdot),V)}^2+kV(f)^2-2V(f)\gb(H,V).
\end{align*}
\end{lemma}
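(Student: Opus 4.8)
The plan is to reduce the statement to a single pointwise identity between $\tilde A$ and $A$, and then expand a square. First I would use \cref{prop:UnderConformalChange}\ref{ucc:conn}: if $X,Y$ are tangent to $\Sigma$, the terms $X(f)Y$ and $Y(f)X$ are again tangent, so they contribute nothing to the normal component, and since the normal projection is the same for $\gb$ and $\gc$ one gets
\[
\tilde A(X,Y)=(\tilde\nabla_X Y)^\perp=(\nabla_X Y)^\perp-\gb(X,Y)(\nabla f)^\perp=A(X,Y)-\gb(X,Y)\,\nabla^\perp f.
\]
(As a sanity check, tracing this over $\Sigma$ with respect to $\gc$ and using $\tilde E_i=e^{-f}E_i$ recovers \cref{lem:MeanCurvatureCC}.)

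Next I would evaluate $\gc(\tilde A(\cdot,\cdot),V)$ in the $\gc$-orthonormal frame $\{\tilde E_i\}_{i=1}^k$ supplied by \cref{lem:ConformalChangeBasis}, i.e.\ $\tilde E_i=e^{-f}E_i$. By bilinearity of $\tilde A$,
\[
\gc\bigl(\tilde A(\tilde E_i,\tilde E_j),V\bigr)=e^{2f}\,\gb\bigl(e^{-2f}\tilde A(E_i,E_j),V\bigr)=\gb\bigl(A(E_i,E_j),V\bigr)-\delta_{ij}\,\gb(\nabla^\perp f,V),
\]
and since $V\in\Gamma(N\Sigma)$ we have $\gb(\nabla^\perp f,V)=\gb(\nabla f,V)=V(f)$, so this inner product equals $\gb(A(E_i,E_j),V)-\delta_{ij}V(f)$.

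Finally I would square and sum over $i,j\in\{1,\dots,k\}$:
\[
\abs{\gc(\tilde A(\cdot,\cdot),V)}^2=\sum_{i,j=1}^k\bigl(\gb(A(E_i,E_j),V)-\delta_{ij}V(f)\bigr)^2.
\]
The pure $A$-square gives $\abs{\gb(A(\cdot,\cdot),V)}^2$; the cross term gives $-2V(f)\sum_{i=1}^k\gb(A(E_i,E_i),V)=-2V(f)\gb(H,V)$ because $H=\tr_\Sigma A$; and the remaining term is $k V(f)^2$. Summing these yields exactly the asserted formula. I do not expect a genuine obstacle here: the only points that require care are the conformal invariance of the normal projection (so that $(\nabla f)^\perp$ is unambiguous) and keeping track of the $e^{\pm 2f}$ factors when passing between the $\gb$- and $\gc$-orthonormal frames.
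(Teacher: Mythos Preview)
Your proof is correct and follows essentially the same route as the paper: both derive $\gc(\tilde A(\tilde E_i,\tilde E_j),V)=\gb(A(E_i,E_j),V)-\delta_{ij}V(f)$ from \cref{prop:UnderConformalChange}\ref{ucc:conn} and then expand the square. The only cosmetic difference is that you first record the coordinate-free identity $\tilde A(X,Y)=A(X,Y)-\gb(X,Y)\nabla^\perp f$ before plugging in the frame, whereas the paper works directly with $\tilde A(\tilde E_i,\tilde E_j)$.
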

\begin{proof}
By \cref{prop:UnderConformalChange}\ref{ucc:conn}, note that
\begin{align*}
    \tilde{A}(\tilde{E}_i,\tilde{E}_j)=(\tilde\nabla_{\tilde E_i}\tilde E_j)^\perp=A(\tilde{E}_i,\tilde{E}_j)-e^{-2f}\delta_{ij}(\nabla f)^\perp  ,
\end{align*}
    which implies
   \begin{align*}
       \gc(\tilde{A}(\tilde{E}_i,\tilde{E}_j),V)=\gc(A(\tilde{E}_i,\tilde{E}_j)-e^{-2f}\delta_{ij}(\nabla f)^\perp  ,V) = \gb(A(E_i,E_j),V)-\delta_{ij} V(f).
   \end{align*}
   Taking the absolute value squared, we have
   \begin{align*}
       \abs{\gc(\tilde{A}(\tilde{E}_i,\tilde{E}_j),V)}^2=\abs{\gb(A(E_i,E_j),V)}^2+\delta_{ij} V(f)^2-2\delta_{ij} V(f)\gb(A({E}_i,{E}_j),V).
   \end{align*}
   We can conclude by summing over $i,j\in\{1,\ldots,k\}$ and observing that
   \begin{align*}
       -2\sum_{i,j=1}^k\delta_{ij} V(f)\gb(A({E}_i,{E}_j),V)&=-2 V(f) g\left(\sum_{i=1}^k {A}(E_i,E_i),V\right) = -2V(f) \gb(H,V). \qedhere
   \end{align*}
\end{proof}

\begin{proposition} \label{prop:SmallQTransf}
Assume that $\Sigma$ is minimal with respect to $\gc$, then for every $V\in\Gamma(N\Sigma)$ we have
\begin{align*}
    \tilde{q}^{\Sigma}(V,V)&= q^{\Sigma} (V,V) + (\nabla^\top f)(\abs{V}^2_\gb)+k\Hess f(V,V) + k\abs{\nabla f}^2_{\gb} \abs{V}_\gb^2+\div_\Sigma(\nabla f) \abs{V}_\gb^2 .
\end{align*}
\end{proposition}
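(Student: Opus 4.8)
The plan is to substitute the three lemmas established just above directly into the pointwise definition
\[
\tilde{q}^{\Sigma}(V,V)= \lvert\tilde{\nabla}_{\Sigma}^\perp V\rvert^2_\gc- \widetilde \tr_{\Sigma} (\tilde{R}_M(V,\cdot)V)- \lvert \gc(\tilde{A}(\cdot,\cdot),V) \rvert^2,
\]
and then to track the cancellations. The three ``$\gb$-only'' pieces $\lvert\nabla_{\Sigma}^\perp V\rvert^2_{\gb}$, $-\tr_\Sigma( R_M(V,\cdot)V)$ and $-\lvert \gb(A(\cdot,\cdot),V) \rvert^2$ reassemble exactly into $q^{\Sigma}(V,V)$. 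The two copies of $\abs{\grad^\top f}^2_\gb \abs{V}^2_\gb$ — one entering with a plus sign from the first lemma, the other with a minus sign from the second — cancel against each other. Collecting what is left, besides the terms $(\nabla^\top f)(\abs{V}^2_\gb)$, $k\Hess f(V,V)$, $k\abs{\nabla f}^2_{\gb} \abs{V}_\gb^2$ and $\div_\Sigma(\nabla f) \abs{V}_\gb^2$ that already appear in the statement, one is left only with
\[
-2k\,V(f)^2 + 2\,V(f)\,\gb(H,V).
\]

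It remains to show that this expression vanishes, and this is the only place where minimality is used. Since $\Sigma$ is minimal with respect to $\gc$, we have $\tilde H = 0$, so \cref{lem:MeanCurvatureCC} gives $H = k\nabla^\perp f$. Because $V \in \Gamma(N\Sigma)$ is normal to $\Sigma$, we have $V(f) = \gb(\nabla f, V) = \gb(\nabla^\perp f, V)$, whence $\gb(H,V) = k\,\gb(\nabla^\perp f, V) = k\,V(f)$, and therefore $-2k\,V(f)^2 + 2\,V(f)\,\gb(H,V) = 0$. This produces the claimed identity.

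I do not expect a substantial obstacle here: the computation is essentially bookkeeping across the three preceding lemmas, and the one conceptual point is to notice that minimality enters precisely through \cref{lem:MeanCurvatureCC}, which is exactly what is needed to kill the residual term $-2k\,V(f)^2 + 2\,V(f)\,\gb(H,V)$. The only thing to be careful about is the sign tracking when the minus signs from $-\widetilde \tr_{\Sigma}$ and $-\lvert \gc(\tilde{A}(\cdot,\cdot),V)\rvert^2$ are distributed over the right-hand sides of the second and third lemmas.
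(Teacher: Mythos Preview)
Your proposal is correct and follows exactly the paper's approach: the paper's proof simply states that the identity ``follows directly from the previous lemmas and $\gb(H,V) = \gc(\tilde H,V) + k \gb(\nabla f, V) = kV(f)$, which is a consequence of \cref{lem:MeanCurvatureCC} and $\tilde H = 0$,'' and your write-up just makes this bookkeeping explicit. The sign tracking and the cancellation of the residual $-2k\,V(f)^2 + 2\,V(f)\,\gb(H,V)$ via minimality are both handled correctly.
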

\begin{proof}
The statement follows directly from the previous lemmas and 
\[
\gb(H,V) = \gc(\tilde H,V) + k \gb(\nabla f, V) = kV(f),
\]
which is a consequence of \cref{lem:MeanCurvatureCC} and $\tilde H = 0$.
\end{proof}
\begin{corollary}\label{cor:SecondVariationCC}
Assume that $\Sigma$ is minimal with respect to $\gc$, then for every $V\in\Gamma(N\Sigma)$ it holds
\begin{align*}
    \tilde{q}^{\Sigma}(\tilde{V},\tilde{V})&= q^{\Sigma} (V,V) e^{-2f} -\abs{\nabla^\top f}^2_\gb \abs{V}^2_\gb e^{-2f}+k\Hess f(V,V)e^{-2f} +{}\\
    &\pheq + k\abs{\nabla f}^2_{\gb} \abs{V}^2_\gb e^{-2f}+\div_\Sigma(\nabla f) \abs{V}^2_\gb e^{-2f},
\end{align*}
where $\tilde{V}=e^{-f} V$.

\end{corollary}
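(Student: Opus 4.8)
The plan is to derive Corollary~\ref{cor:SecondVariationCC} from Proposition~\ref{prop:SmallQTransf} by an explicit algebraic substitution, the only subtlety being the correct bookkeeping of the conformal factors as they are pulled through the various tensorial quantities.

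First I would record how $\tilde q^\Sigma$ scales when we plug in the rescaled normal field $\tilde V = e^{-f}V$ instead of $V$. The connection term in $\tilde q^\Sigma(\tilde V,\tilde V)$ should be expanded via Proposition~\ref{prop:UnderConformalChange}\ref{ucc:conn} (or the first Lemma of Section~\ref{sec:2VUnderConformal}) applied to $\tilde V$, which produces $\lvert \tilde\nabla_\Sigma^\perp \tilde V\rvert^2_\gc = e^{-2f}\bigl(\lvert\nabla_\Sigma^\perp V\rvert^2_\gb + (\nabla^\top f)(\lvert V\rvert^2_\gb) + \lvert\nabla^\top f\rvert^2_\gb\,\lvert V\rvert^2_\gb\bigr)$ plus cross terms — but in fact the cleanest route is to not re-expand from scratch, and instead use that $\tilde q^\Sigma$ is quadratic and that, pointwise, $\lvert \tilde V\rvert^2_\gc = \gc(e^{-f}V,e^{-f}V) = e^{-2f}\cdot e^{2f}\lvert V\rvert^2_\gb = \lvert V\rvert^2_\gb$, so $\tilde V$ is really just "the same vector field measured in $\gc$-units." The key observation is therefore that $\tilde q^\Sigma(\tilde V,\tilde V)$ is precisely $e^{-2f}$ times the expression one gets by taking $\tilde q^\Sigma$ of a field with the same $\gc$-norm as the $\gb$-norm of $V$; more concretely, each of the three terms defining $\tilde q^\Sigma$ scales homogeneously, and substituting $\tilde V = e^{-f}V$ into the identity of Proposition~\ref{prop:SmallQTransf} and then multiplying through by the appropriate power of $e^{f}$ yields the claim.

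The concrete steps: (1) start from Proposition~\ref{prop:SmallQTransf}, which reads $\tilde q^\Sigma(W,W) = q^\Sigma(W,W) + (\nabla^\top f)(\lvert W\rvert^2_\gb) + k\Hess f(W,W) + k\lvert\nabla f\rvert^2_\gb\lvert W\rvert^2_\gb + \div_\Sigma(\nabla f)\lvert W\rvert^2_\gb$ for an arbitrary $W\in\Gamma(N\Sigma)$; (2) one needs to understand how the left side transforms when $W$ is rescaled, so instead observe that $\tilde q^\Sigma$ in its definition is genuinely built from $\gc$ and scales as $\tilde q^\Sigma(e^{-f}V, e^{-f}V) = e^{-2f}\,\tilde q^\Sigma(V,V)$ — this is because $\lvert\tilde\nabla^\perp_\Sigma(e^{-f}V)\rvert^2_\gc$, $\widetilde\tr_\Sigma(\tilde R_M(e^{-f}V,\cdot)e^{-f}V)$, and $\lvert\gc(\tilde A(\cdot,\cdot),e^{-f}V)\rvert^2$ each acquire exactly one factor $e^{-2f}$ from the two slots of $V$ (the derivative hitting $e^{-f}$ contributes terms that cancel against the extra pieces — this is the cross-term cancellation and is the only place real care is needed); (3) combining (1) and (2), $\tilde q^\Sigma(\tilde V,\tilde V) = e^{-2f}\tilde q^\Sigma(V,V) = e^{-2f}\bigl(q^\Sigma(V,V) + (\nabla^\top f)(\lvert V\rvert^2_\gb) + k\Hess f(V,V) + k\lvert\nabla f\rvert^2_\gb\lvert V\rvert^2_\gb + \div_\Sigma(\nabla f)\lvert V\rvert^2_\gb\bigr)$; (4) rewrite $(\nabla^\top f)(\lvert V\rvert^2_\gb)$ — this does not equal $-\lvert\nabla^\top f\rvert^2_\gb\lvert V\rvert^2_\gb$ in general, so the replacement of the term $(\nabla^\top f)(\lvert V\rvert^2_\gb)$ by $-\lvert\nabla^\top f\rvert^2_\gb\lvert V\rvert^2_\gb$ in the statement must come from the scaling of the connection term, i.e. the step where $\lvert\tilde\nabla^\perp_\Sigma\tilde V\rvert^2_\gc$ is expanded directly. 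This is precisely the computation in the first Lemma of Section~\ref{sec:2VUnderConformal} applied with $\tilde V$ in place of $V$, and it produces the sign-flipped quadratic term $-\lvert\nabla^\top f\rvert^2_\gb$.

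The main obstacle, then, is the sign and the cross-terms in step (4): one must resist the temptation to treat $\tilde q^\Sigma(\tilde V,\tilde V)$ as simply $e^{-2f}\tilde q^\Sigma(V,V)$ plus the Proposition~\ref{prop:SmallQTransf} substitution, because the $\tilde\nabla$-derivative differentiates the conformal factor $e^{-f}$ sitting inside $\tilde V$, and those extra first-order terms are exactly what convert $+(\nabla^\top f)(\lvert V\rvert^2_\gb)$ into $-\lvert\nabla^\top f\rvert^2_\gb\lvert V\rvert^2_\gb$. So the honest proof is: directly substitute $\tilde V = e^{-f}V$ into the definition of $\tilde q^\Sigma$, use the three Lemmas of Section~\ref{sec:2VUnderConformal} with $V$ replaced by $\tilde V$ (noting $\lvert\tilde V\rvert^2_\gc = \lvert V\rvert^2_\gb$ so the curvature and second-fundamental-form terms are unaffected by the rescaling beyond the overall $e^{-2f}$), track the single extra cross term carefully, and collect. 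Everything else is routine, and one closes by invoking $\tilde H = 0$ via Lemma~\ref{lem:MeanCurvatureCC} exactly as in Proposition~\ref{prop:SmallQTransf}.
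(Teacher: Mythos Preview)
Your proposal eventually lands on the right approach, but it contains a genuine error along the way, and the correct argument at the end is left too vague.

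The claim in step~(2), that $\tilde q^\Sigma(e^{-f}V,e^{-f}V)=e^{-2f}\tilde q^\Sigma(V,V)$, is false. The curvature and second-fundamental-form terms of $\tilde q^\Sigma$ are indeed $C^\infty$-bilinear in the normal field and scale by $e^{-2f}$, but the connection term $\lvert\tilde\nabla_\Sigma^\perp(\cdot)\rvert^2_\gc$ is not: the Leibniz rule gives $\tilde\nabla^\perp_{\tilde E_i}(e^{-f}V)=-e^{-f}\tilde E_i(f)V+e^{-f}\tilde\nabla^\perp_{\tilde E_i}V$, and squaring produces cross terms that have nothing inside $\tilde q^\Sigma$ to cancel against. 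So step~(3), which combines this false scaling with Proposition~\ref{prop:SmallQTransf}, cannot be right, and indeed it produces $+(\nabla^\top f)(\lvert V\rvert^2_\gb)e^{-2f}$ instead of the correct $-\lvert\nabla^\top f\rvert^2_\gb\lvert V\rvert^2_\gb e^{-2f}$, as you yourself notice in step~(4).

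Your ``honest proof'' in the last paragraph is the correct one and is exactly what the paper does: apply Proposition~\ref{prop:SmallQTransf} with $\tilde V$ in place of $V$, then rewrite each term in terms of $V$. But there are \emph{two} non-tensorial pieces to track, not one. First, inside $q^\Sigma(\tilde V,\tilde V)$ only the connection term fails to be tensorial, and a direct computation gives
\[
\lvert\nabla^\perp_\Sigma\tilde V\rvert^2_\gb = e^{-2f}\lvert\nabla^\perp_\Sigma V\rvert^2_\gb + e^{-2f}\lvert\nabla^\top f\rvert^2_\gb\lvert V\rvert^2_\gb - e^{-2f}(\nabla^\top f)(\lvert V\rvert^2_\gb).
\]
Second, the term $(\nabla^\top f)(\lvert\tilde V\rvert^2_\gb)$ is also not tensorial in $V$, and expands as
\[
(\nabla^\top f)(e^{-2f}\lvert V\rvert^2_\gb) = -2e^{-2f}\lvert\nabla^\top f\rvert^2_\gb\lvert V\rvert^2_\gb + e^{-2f}(\nabla^\top f)(\lvert V\rvert^2_\gb).
\]
Adding these two contributions, the $(\nabla^\top f)(\lvert V\rvert^2_\gb)$ terms cancel and the $\lvert\nabla^\top f\rvert^2_\gb$ terms combine to a single $-e^{-2f}\lvert\nabla^\top f\rvert^2_\gb\lvert V\rvert^2_\gb$, which is where the sign flip you were looking for actually comes from. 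All remaining terms in Proposition~\ref{prop:SmallQTransf} are tensorial and simply pick up the factor $e^{-2f}$.
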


\begin{proof}
The first observation is that all the terms of $q^\Sigma$ are tensorial apart from $\abs{\nabla^\perp_{E_i} \tilde{V}}_{\gb}^2$, for which we have
\begin{align*}
    \nabla^\perp_{E_i} \tilde{V}=E_i(e^{-f}) V+e^{-f}\nabla^\perp_{E_i}V=-e^{-f}E_i(f)V+e^{-f}\nabla^\perp_{E_i}V,
\end{align*}
and, hence
\begin{align*}
    \abs{\nabla^\perp_{{E_i}} \tilde{V}}_{\gb}^2&=e^{-2f}{E}_i(f)^2\gb(V,V)-2e^{-2f}E_i(f)\gb(\nabla^\perp_{{E}_i}V, V)+e^{-2f}\gb(\nabla^\perp_{{E}_i} V,\nabla^\perp_{{E}_i} V).
\end{align*}
Summing over $i\in\{1,\ldots,k\}$, we get
\[
\abs{\nabla^\perp_{\Sigma} \tilde{V}}_{\gb}^2 =  
\abs{\nabla^\perp_{\Sigma} {V}}_{\gb}^2 e^{-2f} + \abs{\nabla^\top f}^2_\gb \abs{V}^2_\gb e^{-2f} - (\nabla^\top f)(\abs{V}^2_\gb) e^{-2f}.
\]
The only other term in the equation of \cref{prop:SmallQTransf} that is not tensorial is $(\nabla^\top f)(\abs{\tilde V}^2_\gb)$, for which we have
\[
(\nabla^\top f)(e^{-2f}\abs{V}^2_\gb) = -2 \abs{\nabla^\top f}^2_\gb \abs{V}^2_ge^{-2f} + (\nabla^\top f)(\abs{V}^2_\gb) e^{-2f}.
\]
Combining these calculations with \cref{prop:SmallQTransf}, we conclude.
\end{proof}

\section{Proof of the main result}\label{sec:ProofMainThm}
In this section, we prove our main theorem. We take as base manifold $(M^n,g)$ the round unit sphere $(S^n,\gb)$ and we consider a function $f\in C^{\infty}(M)$, which induces a fixed conformal metric $\gc = e^{2f}\gb$ on~$S^n$. 

Let $\mathcal{F}\eqdef\left\{h\big|_{S^n}: h\in \hom(\R^{n+1},\R)\right\}$ be the family of linear maps from $\R^{n+1}$ to $\R$ restricted to $S^n$, and let $\mathcal{V}\eqdef\left\{\nabla h: h\in \mathcal{F}\right\}\subset \vf(S^n)$ be the projection to $S^n$ of the constant vector fields of $\R^{n+1}$. The natural isomorphism $\R^{n+1}\cong\mathcal{V}$, which associates to any vector $v$ of $\R^{n+1}$ the gradient of the function $x\mapsto \langle v,x\rangle$ on $S^n$, induces a natural inner product on $\mathcal{V}$. 

Simons in \cite[Lemma 5.1.4]{Simons1968} observes that each element of $\mathcal{V}$ is a negative direction of the $k$-volume for every $k$-dimensional closed minimal submanifold $\Sigma^k$ of the round unit sphere. In particular, the author proves that
\[
Q^\Sigma(V^\perp,V^\perp) = -k \int_\Sigma \abs{V^\perp}^2_\gb \de \gb,
\]
for all $V\in\mathcal{V}$.

In what follows, we consider \emph{any} closed submanifold $\Sigma^k$ of $S^n$, without the assumption of minimality. The previous equation for $Q^\Sigma(V^\perp,V^\perp)$ does not hold for all $V\in\mathcal{V}$. However, we surprisingly recover the same result when we trace over $\mathcal{V}$.

\begin{theorem}\label{thm:LawsonSimonsClosed}
Let $\Sigma^k$ be a $k$-dimensional closed submanifold of $(S^n,\gb)$.
Then, if we take the trace of $q^\Sigma$ over $\mathcal{V}$, we get
\begin{align*}
\tr_{\mathcal{V}}q^{\Sigma}\eqdef \sum_{\alpha=1}^{n+1} q^{\Sigma} (V_\alpha^\perp,V_\alpha^\perp)=-k(n-k),
\end{align*}
where $\{V_\alpha\}_{\alpha=1}^{n+1}$ is any orthonormal basis of $\mathcal{V}$. In particular, if we integrate over $\Sigma$, we get
\[
\tr_{\mathcal{V}} {Q}^{\Sigma} \eqdef \sum_{\alpha=1}^{n+1} Q^\Sigma(V_\alpha^\perp, V_\alpha^\perp) =  \int_\Sigma \tr_{{\mathcal V}} q^\Sigma \de\gb = -k(n-k)\vol_g(\Sigma).
\]
\end{theorem}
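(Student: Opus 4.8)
The plan is to compute $\sum_{\alpha=1}^{n+1} q^\Sigma(V_\alpha^\perp, V_\alpha^\perp)$ pointwise at a fixed $p \in \Sigma$, using a convenient choice of orthonormal basis of $\mathcal V$. Since $\mathcal V \cong \R^{n+1}$ isometrically, and the value at $p$ of the vector field $\nabla h$ associated to $v \in \R^{n+1}$ is the tangential projection of $v$ onto $T_p S^n$, I would pick the basis $\{V_\alpha\}$ so that $V_\alpha(p)$ is (up to scaling by the ambient radial factor, which is $1$ on the unit sphere) the projection of the standard basis $\{e_\alpha\}$ of $\R^{n+1}$ with $e_{n+1}$ pointing in the radial direction at $p$. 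Concretely, at $p$ we may arrange $V_i(p) = E_i$ for $i = 1,\dots,k$ (tangent to $\Sigma$), $V_r(p) = E_r$ for $r = k+1,\dots,n$ (normal to $\Sigma$ but tangent to $S^n$), and $V_{n+1}(p) = 0$. So the only basis vectors with nonzero normal component at $p$ are $V_{k+1},\dots,V_n$, each with $V_r^\perp(p) = E_r$, a unit normal vector.

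Next I would evaluate each of the three terms of $q^\Sigma(V_r^\perp, V_r^\perp)$ summed over $r = k+1,\dots,n$ at the point $p$. For the curvature term, since $(S^n, g)$ is the round unit sphere, $R_M(X,Y)Z = g(X,Z)Y - g(Y,Z)X$ in the paper's sign convention, so $\operatorname{tr}_\Sigma(R_M(E_r,\cdot)E_r) = \sum_{i=1}^k g(R_M(E_r,E_i)E_r, E_i) = \sum_{i=1}^k (g(E_r,E_r)g(E_i,E_i) - g(E_i,E_r)^2) = k$; summing over the $n-k$ values of $r$ gives $k(n-k)$, contributing $-k(n-k)$ to the trace. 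For the second fundamental form term, $\sum_{r=k+1}^n |g(A(\cdot,\cdot), E_r)|^2 = \sum_{r}\sum_{i,j} g(A(E_i,E_j), E_r)^2 = |A|^2_g$ at $p$. For the gradient term $\sum_r |\nabla_\Sigma^\perp V_r^\perp|^2_g$ at $p$, I would use the explicit structure of the fields in $\mathcal V$: writing the connection on $S^n$ in terms of the position vector, one gets $\nabla_X V = -h(p) X$ for $V = \nabla h$, i.e. the ambient derivative of a restricted linear function is radial; tracking the normal projection and differentiating $V^\perp$ along $\Sigma$ should produce exactly the terms that cancel $|A|^2_g$. The cleanest route is to recall (or rederive from Simons) the pointwise identity $q^\Sigma(V^\perp, V^\perp) = -k|V^\perp|^2_g + (\text{terms summing to }|A_V|^2\text{-type and total-divergence contributions that vanish under }\operatorname{tr}_{\mathcal V})$; tracing over $\mathcal V$ kills the extra pieces and leaves $-k \sum_r |V_r^\perp(p)|^2_g = -k(n-k)$.

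For the integrated statement, once $\operatorname{tr}_{\mathcal V} q^\Sigma = -k(n-k)$ holds as a pointwise identity of functions on $\Sigma$ (independent of $p$), integrating over $\Sigma$ against $\de g$ and exchanging the finite sum with the integral gives $\operatorname{tr}_{\mathcal V} Q^\Sigma = \sum_{\alpha} \int_\Sigma q^\Sigma(V_\alpha^\perp, V_\alpha^\perp)\de g = \int_\Sigma \operatorname{tr}_{\mathcal V} q^\Sigma \de g = -k(n-k)\operatorname{vol}_g(\Sigma)$, which is immediate.

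The main obstacle I anticipate is the gradient term $\sum_r |\nabla_\Sigma^\perp V_r^\perp|^2_g$: unlike the other two terms it is not purely tensorial in $V^\perp$ at $p$ (it involves first derivatives of the vector fields along $\Sigma$, hence how $V_\alpha^\top$ and $V_\alpha^\perp$ vary near $p$, not just their values), so one must genuinely use the special geometry of the fields in $\mathcal V$ — namely that they are gradients of restrictions of ambient linear functions — and carefully bookkeep the split into tangential and normal parts of $\Sigma$. I expect this to reduce, after summing over the basis of $\mathcal V$, to a clean cancellation with the $|A|^2_g$ term plus a term that is a tangential divergence (hence dies pointwise in the trace or, at worst, upon integration); verifying that cancellation precisely is the crux, and it is essentially a repackaging of Simons' Lemma 5.1.4 computation with the minimality hypothesis dropped.
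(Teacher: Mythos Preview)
Your overall strategy matches the paper's proof exactly: choose an adapted orthonormal basis of $\mathcal V$ at a point $p$, compute the three terms of $q^\Sigma$, and observe cancellation. You correctly identify $\nabla_X V = -\langle v,x\rangle X$, and your treatment of the curvature and second-fundamental-form terms is correct.

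The gap is in the gradient term, and it is the opposite of what you suspect. You restrict attention to $r=k+1,\dots,n$ on the grounds that those are the indices with $V_\alpha^\perp(p)\neq 0$. But $\lvert\nabla^\perp_\Sigma V_\alpha^\perp\rvert_g^2$ at $p$ depends on the first jet of $V_\alpha^\perp$, not just its value, and in fact for the normal indices $r$ this term \emph{vanishes} at $p$, while for the tangential indices $i=1,\dots,k$ (where $V_i^\perp(p)=0$) it does not. The step you are missing is the one the paper carries out explicitly: for $X$ tangent to $\Sigma$,
\[
\nabla^\perp_X V^\perp \;=\; (\nabla_X V - \nabla_X V^\top)^\perp \;=\; -\langle v,p\rangle\, X^\perp - A(X,V^\top) \;=\; -A(X,V^\top).
\]
With your basis this gives $V_i^\top(p)=E_i$ for $i\le k$ and $V_r^\top(p)=0$ for $r>k$, so
\[
\sum_{\alpha=1}^{n+1}\lvert\nabla^\perp_\Sigma V_\alpha^\perp\rvert_g^2
\;=\;\sum_{i=1}^k\sum_{j=1}^k\lvert A(E_j,E_i)\rvert_g^2
\;=\;\lvert A\rvert_g^2,
\]
and the entire contribution comes from precisely the indices you discarded. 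This cancels the $-\lvert A\rvert_g^2$ from the second-fundamental-form term \emph{pointwise}: there is no divergence term and no integration needed, and the identity $\tr_{\mathcal V} q^\Sigma=-k(n-k)$ holds at every point, as the statement asserts. Your hedge ``at worst, upon integration'' is therefore unnecessary, and the appeal to Simons' minimality-based formula is a red herring --- minimality never enters.
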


\begin{proof}
First observe that, given $V\in\mathcal{V}$ induced by the constant vector $v\in\R^{n+1}$, namely $V(x) = v-\scal{v}{x}x$ for all $x\in S^n$, we have
\[
\nabla _X V=(D_X V)^{TS^n} =-(D_X (\langle v,x\rangle x))^{TS^n}=-\langle v,x\rangle (D_X x)^{TS^n}=-\langle v,x\rangle X,
\]
for all vector fields $X\in\vf(S^n)$, where $D$ is the covariant derivative of $\R^{n+1}$.
Therefore, if we assume that $X$ is tangent to $\Sigma$, we get that
\[
\nabla^\perp_X V^\perp=(\nabla_X V -\nabla_X V^\top)^\perp = -A(X,V^\top).
\]

Fixed $p\in\Sigma$ and given a local orthonormal frame $\{E_i\}_{i=1}^k$ of $\Sigma$ around $p$, we can thus deduce that
\begin{align*}
q^{\Sigma}(V^\perp, V^\perp)&=\lvert\nabla_{\Sigma}^\perp V^\perp\rvert^2_{\gb}- \tr_\Sigma( R_{S^n}(V^\perp,\cdot)V^\perp)-
    \lvert \gb(A(\cdot,\cdot),V^\perp) \rvert^2\\
&=\sum_{i=1}^k \abs{A(E_i,V^\top)}^2_\gb-k\abs{V^\perp}^2_\gb-\sum_{i,j=1}^k \abs{\gb( A(E_i,E_j),V^\perp)}^2.
\end{align*}

Now, let $\{{V}_\alpha\}_{\alpha=1}^{n+1}$ be an orthonormal basis of ${\mathcal{V}}$, induced by orthonormal vectors $\{v_\alpha\}_{\alpha=1}^{n+1}$ in $\R^{n+1}$ such that
\begin{itemize}
    \item $v_1={E}_1(p),\ldots,v_k={E}_k(p)$ are tangent to $\Sigma$ at $p$;
    \item $v_{k+1},\ldots,v_n$ are normal to $\Sigma$ and tangent to $S^n$ in $p$;
    \item $v_{n+1}=p$.
\end{itemize}
With this choice, observe that $V_i^\top(p) = E_i(p)$ and $V_i^\perp(p) = 0$ for $i\in\{1,\ldots,k\}$, $V_r^\top(p) = 0$ and $V_r^\perp(p) = v_r$ for $r\in\{k+1,\ldots,n\}$, and $V_{n+1}(p) = 0$.
Therefore, at the point $p$, we get that
\begin{align*}
\tr_{\mathcal{V}} q^\Sigma &=\sum_{\alpha=1}^{n+1} q^{\Sigma}(V_\alpha^\perp, V_\alpha^\perp)=\sum_{\alpha=1}^{n+1}\sum_{i=1}^k \abs{A(E_i,V^\top_\alpha)}^2_\gb-k\sum_{\alpha=1}^{n+1}\abs{V^\perp_\alpha}^2_\gb-\sum_{\alpha=1}^{n+1}\sum_{i,j=1}^k \abs{\gb( A(E_i,E_j),V^\perp_\alpha)}^2\\
&= \sum_{i,j=1}^k \abs{A(E_i,E_j)}^2_\gb - k(n-k) - \sum_{i,j=1}^k \abs{A(E_i,E_j)}^2_\gb = -k(n-k).
\end{align*}
By the independence of the trace from the choice of the orthonormal basis and the arbitrariness we chose $p$, we conclude.
\end{proof}

Given the previous result, it is natural to consider the space $\tilde{\mathcal{V}}=\{\tilde{V}=e^{-f}V \st V\in\mathcal{V}\}$, endowed with the inner product induced by $\mathcal{V}$, as competitors for stability of minimal submanifolds in $(S^n,\gc)$. 

\begin{theorem} \label{thm:TraceSecondVariationCC}
Let $\Sigma^k$ be a k-dimensional closed minimal submanifold of $(S^n,\gc)$, then
\begin{align*}
\tr_{\tilde{\mathcal{V}}} \tilde{q}^{\Sigma}\eqdef \sum_{\alpha=1}^{n+1}\tilde q^\Sigma(\tilde V_\alpha^\perp,\tilde V_\alpha^\perp)&= - \tilde{K}_{S^n} (\Sigma,N\Sigma)+k\abs{\nabla^\perp f}^2_{\gb} e^{-2f}\\
&=- \tilde{K}_{S^n} (\Sigma,N\Sigma)+\frac{1}{k}\abs{H}^2_{\gb} e^{-2f},
\end{align*}
where $\{\tilde V_\alpha\}_{\alpha=1}^{n+1}$ is any orthonormal basis of $\tilde {\mathcal{V}}$. Here, by $\tilde{K}_{S^n} (\Sigma,N\Sigma)$, we mean \[\tilde{K}_{S^n} (\Sigma,N\Sigma)\eqdef\sum_{i=1}^k \sum_{r=k+1}^n \tilde{K}_{S^n} (\tilde{E}_i,\tilde{E}_r),\]
where $\{\tilde{E}_\alpha\}_{\alpha=1}^n$ is an orthonormal basis of $S^n$ with respect to $\gc$ such that the first $k$ terms form an orthonormal basis for $\Sigma$.
\end{theorem}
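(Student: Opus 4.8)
The plan is to apply \cref{cor:SecondVariationCC} to each normal section $V_\alpha^\perp$ (so that $\tilde V_\alpha^\perp = e^{-f}V_\alpha^\perp$), sum over $\alpha = 1,\dots,n+1$, and then identify every term of the resulting expression; note that $\tr_{\tilde{\mathcal{V}}}\tilde q^\Sigma$ does not depend on the chosen orthonormal basis of $\tilde{\mathcal{V}}$, since $\tilde V\mapsto \tilde V^\perp$ is linear and $\tilde q^\Sigma$ is the quadratic form associated to a symmetric bilinear form on sections. The leading contribution $\sum_\alpha q^\Sigma(V_\alpha^\perp, V_\alpha^\perp)$ equals $-k(n-k)$ by \cref{thm:LawsonSimonsClosed}, while all the remaining terms of \cref{cor:SecondVariationCC} are pointwise tensorial in $V_\alpha^\perp$.

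Concretely, as in the proof of \cref{thm:LawsonSimonsClosed}, I would fix $p\in\Sigma$ and choose the orthonormal basis $\{v_\alpha\}_{\alpha=1}^{n+1}$ of $\R^{n+1}$ adapted to $\Sigma$ at $p$, so that $V_i^\perp(p)=0$ for $i\le k$, $V_r^\perp(p)=E_r(p)$ for $k+1\le r\le n$, and $V_{n+1}^\perp(p)=0$. Then at $p$ we immediately get $\sum_\alpha \abs{V_\alpha^\perp}^2_\gb = n-k$ and $\sum_\alpha \Hess f(V_\alpha^\perp, V_\alpha^\perp) = \sum_{r=k+1}^n \Hess f(E_r,E_r)$, so summing \cref{cor:SecondVariationCC} over $\alpha$ and using \cref{thm:LawsonSimonsClosed} for the $q^\Sigma$ terms expresses $\tr_{\tilde{\mathcal{V}}}\tilde q^\Sigma$ at $p$ as $e^{-2f}$ times a polynomial in $\abs{\nabla^\top f}^2_\gb$, $\abs{\nabla f}^2_\gb$, $\div_\Sigma(\nabla f)=\sum_{i=1}^k\Hess f(E_i,E_i)$ and $\sum_{r=k+1}^n \Hess f(E_r,E_r)$; since this output is frame-independent, it gives the value on all of $\Sigma$.

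Next I would expand $\tilde K_{S^n}(\Sigma, N\Sigma)$ independently. Because the sectional curvature of a $2$-plane is independent of the vectors spanning it, $\tilde K_{S^n}(\tilde E_i, \tilde E_r) = \tilde K_{S^n}(E_i, E_r)$, and $E_i, E_r$ are $\gb$-orthonormal with $K_{S^n}(E_i,E_r)=1$; summing the identity of \cref{prop:UnderConformalChange}\ref{ucc:sect} over $1\le i\le k$, $k+1\le r\le n$ and using $\sum_i E_i(f)^2 = \abs{\nabla^\top f}^2_\gb$, $\sum_r E_r(f)^2 = \abs{\nabla^\perp f}^2_\gb$, $\abs{\nabla f}^2_\gb = \abs{\nabla^\top f}^2_\gb + \abs{\nabla^\perp f}^2_\gb$, writes $e^{2f}\tilde K_{S^n}(\Sigma,N\Sigma)$ as another explicit polynomial in the same quantities. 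Comparing the two, all terms cancel except a single leftover $k\abs{\nabla^\perp f}^2_\gb e^{-2f}$, which is the first claimed equality. The second equality is then immediate: minimality of $\Sigma$ with respect to $\gc$ gives $\tilde H=0$, so \cref{lem:MeanCurvatureCC} yields $H=k\nabla^\perp f$ and hence $k\abs{\nabla^\perp f}^2_\gb = \frac{1}{k}\abs{H}^2_\gb$.

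The argument is essentially bookkeeping; the only genuine point is the two trace identities for $\sum_\alpha \abs{V_\alpha^\perp}^2_\gb$ and $\sum_\alpha \Hess f(V_\alpha^\perp, V_\alpha^\perp)$, which rest on orthonormality of $\{v_\alpha\}$ in $\R^{n+1}$ — the same mechanism behind \cref{thm:LawsonSimonsClosed}. The main thing to be careful about is keeping the tangential/normal splittings of $\abs{\nabla f}^2_\gb$ and of the trace of $\Hess f$ consistent on the two sides, so that the cancellation leaving exactly $k\abs{\nabla^\perp f}^2_\gb e^{-2f}$ goes through.
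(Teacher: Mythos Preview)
Your proposal is correct and follows essentially the same approach as the paper: apply \cref{cor:SecondVariationCC} to the $\tilde V_\alpha^\perp$, sum using \cref{thm:LawsonSimonsClosed} and the adapted basis at a point, then independently expand $e^{2f}\tilde K_{S^n}(\Sigma,N\Sigma)$ via \cref{prop:UnderConformalChange}\ref{ucc:sect} and compare; the second equality via \cref{lem:MeanCurvatureCC} is also exactly what the paper does.
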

\begin{remark}
Observe that
\begin{align*}
\tilde K_{S^n}(\Sigma,N\Sigma) & = \sum_{i=1}^k \sum_{r=k+1}^n \tilde{K}_{S^n} (\tilde{E}_i,\tilde{E}_r)= \sum_{i=1}^k \sum_{r=k+1}^n \gc(\tilde{R}_{S^n} (\tilde{E}_i,\tilde{E}_r)\tilde E_i,\tilde E_r).
\end{align*}
Since $\tilde R_{S^n}$ is a tensor, this proves that $\tilde K_{S^n}(\Sigma,N\Sigma)$ does not depend on the choice of basis $\{\tilde E_\alpha\}_{\alpha=1}^n$.
\end{remark}
\begin{proof}
Fix $p\in\Sigma$ and let $\{\tilde{V}_\alpha\}_{\alpha=1}^{n+1}$ be an orthonormal basis of $\tilde{\mathcal{V}}$ such that $V_\alpha\eqdef e^f\tilde V_\alpha\in\mathcal{V}$ is induced by the vector $v_\alpha$ in $\R^{n+1}$ for all $\alpha\in\{1,\ldots,n+1\}$, where $\{v_\alpha\}_{\alpha=1}^{n+1}$ is an orthonormal basis of $\R^{n+1}$. Moreover, assume that
\begin{itemize}
    \item $E_1\eqdef v_1,\ldots,E_k\eqdef v_k$ are tangent to $\Sigma$ at $p$;
    \item $E_{k+1} \eqdef v_{k+1},\ldots,E_n\eqdef v_n$ are normal to $\Sigma$ and tangent to $S^n$ in $p$;
    \item $v_{n+1}=p$.
\end{itemize}
Observe that, in this way, $\{E_\alpha\}_{\alpha=1}^n$ is an orthonormal basis of $T_p S^n$ with respect to $\gb$ such that the first $k$ terms form an orthonormal basis for $\Sigma$.
Moreover, we have $V_i^\top(p) = E_i$ and $V_i^\perp(p) = 0$ for $i\in\{1,\ldots,k\}$, $V_r^\top(p) = 0$ and $V_r^\perp(p) = E_r$ for $r\in\{k+1,\ldots,n\}$, and $V_{n+1}(p) = 0$.

It follows that, at the point $p$, it holds 
\begin{align*}
\sum_{\alpha=1}^{n+1} \tilde{q}^{\Sigma}(\tilde{V}^\perp_{\alpha},\tilde{V}^\perp_{\alpha})e^{2f}&=\sum_{\alpha=1}^{n+1} q^{\Sigma} (V_\alpha^\perp,V_\alpha^\perp)-(n-k)\abs{\nabla^\top f}^2_\gb  + k(n-k) \abs{\nabla f}^2_\gb  +{}\\
&\pheq + (n-k) \div_\Sigma(\nabla f)   + k\sum_{r=k+1}^n \Hess f(E_r,E_r)
\\
&=-k(n-k)-(n-k)\abs{\nabla^\top f}^2_\gb  + k(n-k) \abs{\nabla f}^2_\gb  +{}\\
&\pheq + (n-k) \div_\Sigma(\nabla f)   + k\sum_{r=k+1}^n \Hess f(E_r,E_r),
\end{align*}
where we used \cref{cor:SecondVariationCC} and \cref{thm:LawsonSimonsClosed}.

However, observe that, by \cref{prop:UnderConformalChange}\ref{ucc:sect}, we have
\begin{align*}
\sum_{i=1}^k\sum_{r=k+1}^n \tilde K_{S^n}(\tilde E_i,\tilde E_r)e^{2f} &=\sum_{i=1}^k\sum_{r=k+1}^n \tilde K_{S^n}(E_i,E_r)e^{2f}\\
&=\sum_{i=1}^k\sum_{r=k+1}^n  K_{S^n}(E_i,E_r) + (n-k) \abs{\nabla^\top f}^2_\gb + k \abs{\nabla^\perp f}^2_\gb -k(n-k) \abs{\nabla f}^2_\gb +{}\\
&\pheq - (n-k)\div_\Sigma(\nabla f) - k \sum_{r=k+1}^n \Hess f(E_r,E_r)\\
&= k(n-k) + (n-k) \abs{\nabla^\top f}^2_\gb + k \abs{\nabla^\perp f}^2_\gb -k(n-k) \abs{\nabla f}^2_\gb +{}\\
&\pheq - (n-k)\div_\Sigma(\nabla f) - k \sum_{r=k+1}^n \Hess f(E_r,E_r).
\end{align*}
Therefore we get that
\begin{align*}
\tr_{\tilde{\mathcal V}} \tilde q^\Sigma= \sum_{\alpha=1}^{n+1} \tilde{q}^{\Sigma}(\tilde{V}^\perp_{\alpha},\tilde{V}^\perp_{\alpha})&=-\sum_{i=1}^k\sum_{r=k+1}^n \tilde K_{S^n}(\tilde E_i,\tilde E_r) + k \abs{\nabla^\perp f}^2_\gb e^{-2f}.
\end{align*}
Finally, the last equality in the statement follows from the fact that $k\nabla^\perp f = H$ by \cref{lem:MeanCurvatureCC}, since $\Sigma$ is minimal with respect to $\gc$ and therefore $\tilde H = 0$.
\end{proof}

We are now ready to prove our main result \cref{thm:main}. Note that we will crucially use the assumption that $k\geq 2$. Indeed, we will apply \cref{prop:UnderConformalChange}\ref{ucc:sect} to (orthonormal) vectors tangent to the submanifold, and hence, we need the dimension of $\Sigma$ to be at least two.

\begin{proof} [Proof of \cref{thm:main}]
Let $\Sigma^k$ be a closed $k$-dimensional minimal submanifold of $(S^n,\gc)$ and let $\{{E}_i\}_{i=1}^k$ be a local orthonormal frame of $\Sigma$ with respect to $\gb$. For all $i\not=j\in\{1,\ldots,k\}$, using \cref{prop:UnderConformalChange}\ref{ucc:sect}, we have
\begin{align*}
e^{2f} \tilde{K}_{S^n}({E}_i,{E}_j)=K_{S^n}({E}_i,{E}_j) +E_i(f)^2 + E_j(f)^2 - \abs{\nabla f}^2_{\gb} - \Hess f (E_i,E_i) - \Hess f (E_j,E_j).
\end{align*}
Summing over all $i\not=j\in\{1,\ldots,k\}$, we get
\[
e^{2f} \sum_{i\not=j=1}^k \tilde{K}_{S^n}({E}_i,{E}_j)=k(k-1) +2(k-1)\abs{\nabla^\top f}^2_{\gb} - k(k-1)\abs{\nabla f}^2_{\gb} - 2(k-1) \div_\Sigma (\nabla f).
\]

Multiplying by $e^{(k-2)f}$ and integrating over $\Sigma$ with respect to the metric $\gb$, we obtain
\begin{align*}
\int_\Sigma \sum_{i\not=j=1}^k \tilde{K}_{S^n}({E}_i,{E}_j) \de\gc &= \int_\Sigma \sum_{i\not=j=1}^k \tilde{K}_{S^n}({E}_i,{E}_j) e^{kf} \de \gb \\
&= k(k-1)\int_\Sigma e^{(k-2)f}\de \gb + \int_\Sigma \left[2(k-1)\abs{\nabla^\top f}^2_{\gb} - k(k-1)\abs{\nabla f}^2_{\gb} \right] e^{(k-2)f}\de \gb + {}\\
&\pheq {}- 2(k-1) \int_\Sigma\div_\Sigma (\nabla f) e^{(k-2)f}\de \gb .
\end{align*}
Now observe that, using the ``generalized divergence theorem'' (see e.g. \cite[Theorem 1]{White2016}) and \cref{lem:MeanCurvatureCC}, we can write the last term as
\begin{align*}
\int_\Sigma\div_\Sigma (\nabla f) e^{(k-2)f}\de \gb &= \int_\Sigma \div_\Sigma(e^{(k-2)f}\nabla f) \de \gb - (k-2)\int_\Sigma \abs{\nabla^\top f}^2_{\gb}e^{(k-2)f} \de \gb\\
&=-\int_\Sigma \gb(H_{\gb},\nabla f) e^{(k-2)f} \de \gb - (k-2)\int_\Sigma \abs{\nabla^\top f}^2_{\gb}e^{(k-2)f} \de \gb\\
&=-k\int_\Sigma \abs{\nabla^\perp f}_{\gb}^2 e^{(k-2)f} \de \gb - (k-2)\int_\Sigma \abs{\nabla^\top f}^2_{\gb}e^{(k-2)f} \de \gb.
\end{align*}
Hence, we get
\begin{align*}
\int_\Sigma \sum_{i\not=j=1}^k \tilde{K}_{S^n}({E}_i,{E}_j) \de\gc&= k(k-1)\int_\Sigma e^{(k-2)f}\de \gb + (k-1)(k-2)\int_\Sigma \abs{\nabla^\top f}^2_{\gb} e^{(k-2)f}\de \gb+{}\\
&\pheq {}+  k(k-1)\int_\Sigma\abs{\nabla^\perp f}^2_{\gb}  e^{(k-2)f}\de \gb\\
& > k(k-1)\int_\Sigma\abs{\nabla^\perp f}^2_{\gb}  e^{(k-2)f}\de \gb.
\end{align*}

Combining with \cref{thm:TraceSecondVariationCC} we obtain 
\begin{align*}
\tr_{\tilde{\mathcal{V}}} \tilde{Q}^{\Sigma} =\int_\Sigma \tr_{\tilde{\mathcal V}} \tilde q^\Sigma \de\gc&= \int_{\Sigma} - \sum_{i=1}^k\sum_{r=k+1}^n\tilde{K}_{S^n} (E_i,E_r)+k\abs{\nabla^\perp f}^2_{\gb} e^{-2f} d\gc \\
&< \int_{\Sigma} - \sum_{i=1}^k\sum_{r=k+1}^n\tilde{K}_{S^n} (E_i,E_r)+ \frac{1}{(k-1)}\sum_{i\not=j=1}^k \tilde{K}_{S^n}({E}_i,{E}_j) \de \gc\\
&\le \int_{\Sigma} - k(n-k)\min_{\pi\subset T_p M}\tilde{K}_{S^n}(\pi) + k \max_{\pi\subset T_pM}\tilde{K}_{S^n}(\pi) \de \gc(p) \\
&\le -k\int_\Sigma \min_{\pi\subset T_p M}\tilde{K}_{S^n}(\pi) (n-k-\delta^{-1})\de \gc(p)< 0,
\end{align*}
where we used that $k\le n-\delta^{-1}$ and that $(M,\gc)$ is $\delta$-pinched.
\end{proof}

\bibliography{biblio}

\printaddress
\end{document}